\def\to{\rightarrow}
\newtheorem{theorem}{Theorem}[section]
\newtheorem{prop}[theorem]{Proposition}
\newtheorem{cor}[theorem]{Corollary}
\theoremstyle{definition}
\newtheorem{definition}[theorem]{Definition}
\newtheorem{example}[theorem]{Example}
\theoremstyle{remark}
\newtheorem{remark}[theorem]{Remark}
\numberwithin{equation}{section}
\newcommand{\xyR}[1]{%
\makeatletter
\xydef@\xymatrixrowsep@{#1}
\makeatother
} % end of \xyR
\newcommand{\xyC}[1]{%
\makeatletter
\xydef@\xymatrixcolsep@{#1}
\makeatother
}% end of \xyC
\begin{document}

  \title{Shuffles of trees}

%    Remove any unused author tags.

%    author information
\author[E. Hoffbeck]{Eric Hoffbeck}
\address{Universit\'e Paris 13, Sorbonne Paris Cit\'e,
LAGA,
CNRS, UMR 7539,
99 avenue Jean-Baptiste Cl\'ement,
F-93430, Villetaneuse, France}
%\curraddr{}
\email{hoffbeck@math.univ-paris13.fr}
%\thanks{}

%    author information
\author[I. Moerdijk]{Ieke Moerdijk}
\address{Department of Mathematics, Utrecht University, PO BOX 80.010, 3508 TA Utrecht, The Netherlands}
%\curraddr{}
\email{i.moerdijk@uu.nl}
%\thanks{}

%\subjclass[2000]{Primary }
%    For articles to be published after 1 January 2010, you may use
%    the following version:
%\subjclass[2010]{Primary }

\keywords{}

%\date{}

%\dedicatory{}
%
\begin{abstract}
We discuss a notion of shuffle for trees which extends the usual notion of a shuffle for two natural numbers. We give several equivalent descriptions, and prove some algebraic and combinatorial properties. In addition, we characterize shuffles in terms of open sets in a topological space associated to a pair of trees. Our notion of shuffle is motivated by the theory of operads and occurs in the theory of dendroidal sets, but our presentation is independent and entirely self-contained. 
\end{abstract}

\maketitle

%%%%%%%%%%%%%%%%%%%%%%%%%%%%%%%%%%%%%%%%%%%%%%%%%%%%%%%%%%%%%%%%%%%%%%%%%%
% 
%%%%%%%%%%%%%%%%%%%%%%%%%%%%%%%%%%%%%%%%%%%%%%%%%%%%%%%%%%%%%%%%%%%%%%%%%%

\section*{Introduction}

For two natural numbers $p$ and $q$ the set of $(p,q)$-shuffles plays a central r\^ole in many parts of algebra, topology, probability theory   and combinatorics. For example, they occur in the description of the coalgebra and Hopf algebra structures on exterior or tensor algebras \cite{Sweedler}, and in the description of the Eilenberg-Zilber map for the homology of a product of two topological spaces \cite{MacLane}. The name shuffle refers back to the fact that the $(p,q)$-shuffles are shuffles of linear orders of length $p$ and $q$ (rather than just sets of cardinality $p$ and $q$),
like shuffling two decks of $p$ and $q$ cards respectively, as studied in~\cite{AD}.

The goal of this paper is to study a notion of shuffle of two trees, rather than just of linear orders. Several such notions already occcur in the literature, for example in the context of automata theory and formal languages \cite{Ito}. Our notion is different from these.
It specialises to the standard one if the two trees happen to be linear orders, and seems very natural from the point of view of non-deterministic programming semantics where the trees describe programs. Such shuffles of trees also enter in the description of a free resolution of the Boardman-Vogt tensor product of operads \cite{BV}, and related to this, play a crucial r\^ole in the homotopy theory of dendroidal sets \cite{CM}.

In this paper, we will present a purely combinatorial and self-contained discussion of this notion of shuffle of two trees, which is motivated by, but can be read completely independently from the theory of operads and dendroidal sets. In particular, we shall consider questions concerning shuffles which have not been addressed in that context, such as: What is the structure of the set of shuffles of two trees? How is the number of shuffles related to the size of the trees? etc. That second question has a very simple answer in terms of binomial coefficients in the linear case, but seems quite intractible for general trees, as we will explain.

The plan of the paper, then, is as follows. In a first section, we will give what we believe is the most accessible definition of a shuffle of two trees, and illustrate it by various examples. In a subsequent section, we prove that this definition is equivalent to several others, the most concise one being that a shuffle of two trees $S$ and $T$ is a maximal subtree of the product partial order $S \times T$ containing all the pairs of leaves (cf Proposition~\ref{Maximality}). In a third section, we discuss some aspects of the number of shuffles of two trees. It is an open question whether one can find a comprehensible closed formula expressing the number of shuffles of two trees $S$ and $T$ in terms of the size (height, width, etc) of $S$ and $T$. We present some upper and lower bounds, and show that in the case where $T$ is a linear tree of length $n$ and $S$ is fixed, this number is a polynomial in $n$ with rational coefficients, of which the degree and leading coefficient can be described quite simply in terms of the size of $S$. In a fourth  section, we will show that the set of shuffles of two trees $S$ and $T$ carries the natural structure of a distributive lattice, which is rather rigid in the sense that in most cases it has no automorphisms other than the ones coming from automorphisms of $S$ or $T$. This description also leads to the observation that shuffles can be composed.
Indeed, two shuffles, one  between $S$ and $T$, and the other between $R$ and $S$, naturally give rise to a third shuffle between $R$ and $T$. More technically, we prove that trees and shuffles between them form a category enriched in distributive lattices.
In a final section, mainly added for motivation and background, we very briefly discuss in which way shuffles of trees naturally occur in topology, in operad theory and in the theory of dendroidal sets.

\textbf{Acknowledgements:} We would like to thank James Cranch who wrote a computer program enumerating shuffles of some small trees, which led to the examples \ref{ExCranch1} and \ref{ExCranch2} in the paper. The first author is also indebted to Denis-Charles Cisinski and Gijs Heuts for discussions which improved his understanding of shuffles.
We also like to thank our home universities and NWO for supporting our mutual visits.

%%%%%%%%%%%%%%%%%%%%%%%%%%%%%%%%%%%%%%%%%%%%%%%%%%%%%%%%%%%%%%%%%%%%%%%%%%
% 
%%%%%%%%%%%%%%%%%%%%%%%%%%%%%%%%%%%%%%%%%%%%%%%%%%%%%%%%%%%%%%%%%%%%%%%%%%

\section{Definition and first examples}\label{S:Def}

In this section we present several equivalent definitions of a shuffle of two trees.
What we mean by a ``tree'' in this context is a finite graph without cycles, whose external edges are open, i.e. connected to one vertex only.
One of these external edges is selected as the root, and the other external edges are called the leaves of the tree.
The chosen root provides an orientation, pictured downwards towards the root. Each vertex will have one outgoing edge (towards the root), and a strictly positive number of incoming edges to which we will refer as the valence of the vertex (see also Remark~\ref{Rem:Stumps}).
Here is a picture of the type of tree that we shall consider.

\begin{equation*}
\xymatrix@R=11pt@C=15pt{
&&&&&&&&\\
&&&&*{}&*{}&*{\bullet}\ar@{}[r]_{u \quad \quad }  \ar@{-}[u]_{i}& \\
&& *{\bullet}\ar@{}[l]^{\quad \quad y} \ar@{-}[ul]^{c}  \ar@{-}[ur]_d &&& *{\bullet}\ar@{}[r]_{z \quad \quad }  \ar@{-}[ul]^{f} \ar@{-}[u]_{g} \ar@{-}[ur]_h&  \\
&&&*{\bullet}\ar@{}[l]^{\quad \ x} \ar@{-}[ul]^b \ar@{-}[urr]_e&&  \\
&&&*{}\ar@{-}[u]_a 
}
\end{equation*}

This is a tree with five leaves, and root edge $a$. There are four vertices, of valence two, two, three and one. The vertex which has the root as output edge will be refered as the root vertex. For an arbitrary tree $T$, we will write $E(T)$ and $V(T)$ for its sets of edges and vertices, respectively. Moreover, we will denote by $r_T$ its root edge (and later in Section~\ref{S:Lattice} its root vertex).

\begin{remark}
When drawing a tree, the picture automatically provides the tree with a planar structure. We do not presuppose our trees to have any planar structure however, and a picture like the one above but with the leaves $c$ and $d$ interchanged, for example, represents the same tree.
\end{remark}

\begin{remark}
There are various ways to think about such a tree which are relevant for what follows. When reading the tree from top to bottom, we think of the edges as \textit{objects} and of the vertices as \textit{operations}, taking a finite (but strictly positive) number of input objects to an output object. In the picture above, the vertex $y$ is an operation taking the objects $c$ and $d$ as inputs and producing $b$ as output. Operations can be composed, for instance $x \circ y$ is an operation with inputs $c, d, e$ and output $a$. This way of viewing trees is common, for example, in the theory of operads, cf. Section~\ref{S:Last} below.\\
In some cases, it may be more suggestive to read the tree from bottom to top, and view the vertices as \textit{decomposition} operations. In the example above, the operation $x$ decomposed the root object into two objects $b$ and $c$, etc. This point of view is relevant in modelling non-determistic programming: the program $x$ transforms a state $a$ into either $b$ or $c$, but we have no way of knowing which one and have to carry both $b$ and $c$ along in the model.
\end{remark}

\begin{remark}
 A tree in which all vertices have valence one will be called a linear tree. It is the same thing as a linear order on the (non-empty) set of its edges, or a (possibly empty) linear order on its vertices. In particular, we will also consider the \textit{unit tree} $\eta$ in which the leaf and the root coincide. We will denote by $L_n$ the linear tree with $n$ vertices. 
Here are pictures of the linear tree $L_3$ with edges $a,b,c,d$, and of the unit tree $\eta$.
\begin{equation*}
\vcenter{\xymatrix@R=8pt@C=10pt{
&\\
&*{ \bullet} \ar@{-}[u]_{d}&& \\
&*{ \bullet} \ar@{-}[u]_{c} \\
&*{ \bullet} \ar@{-}[u]_{b} \\
&*{} \ar@{-}[u]_{a} \\
}}
 \ \  \text{ and } \ \ 
\vcenter{\xymatrix@R=10pt@C=14pt{
&\\
&*{} \ar@{-}[u] \\
}}
\end{equation*}  

\end{remark}

An arbitrary tree $T$ can also be viewed as a non-empty partial order on the set $E(T)$ of its edges. 
We will call a partial order of this form \textit{treelike}.
Thus, a treelike partial order has the following characteristic properties: it is a finite partially ordered set with a smallest element (the root), and with the property that the induced order on each ``down-segment'' of the form $\downarrow e=\{ d \, | \, d \leq e \}$ is linear. It is of course sufficient to ask this for each maximal element $e$, i.e. for each leaf of the tree. 
When $e$ is a leaf, we call $\downarrow e$ the \textit{branch} associated to $e$. 
Sometimes, we will also consider the induced treelike partial order  on the (possibly empty) set $V(T)$ of the vertices of the tree $T$. Notice, however, that one cannot reconstruct $T$ from this partial order, because a vertex on top of the tree can have multiple leaves. However, given a possibly empty treelike partial order $V$, there is always a minimal tree $T$ for which $V=V(T)$. Such trees will be called \textit{reduced}. We will come back to these reduced trees in Section~\ref{S:Lattice}.

\medskip

\subsection{Shuffles of linear trees}\label{ss:ShufflesOfLinearTrees}
There is a well-known notion of shuffle of two linear orders, much used and studied in combinatorics and in algebraic topology:
a shuffle of two linear orders $A$ and $B$ is simply a linear order on the disjoint union $A+B$ which agrees with the given orders on $A$ and on $B$. If $A$ has $m\geq0$ elements and $B$ has $n\geq0$ elements, there are exactly $\binom{m+n}{n}$ such shuffles.

Another way, more common in algebraic topology, is to view a shuffle of two linear orders $A$ and $B$, now assumed non-empty, as a  maximal linearly ordered subset within the product partial order $A \times B$. If $A$ is the order $0<1< \ldots <m$ and $B$ is the order $0<1< \ldots <n$, then such a linear order has $(0,0)$ as smallest element and $(m,n)$ as largest, and can be pictured as a ̀``staircase'' path through the rectangular grid.

$$\xymatrix@M=0pt@C=6mm@R=6mm{
\ar@{.}[d]\ar@{.}[r]  &\ar@{.}[d]\ar@{.}[r]  &\ar@{.}[d]\ar@{->}[r]^{ \quad \ \quad (m,n)}  &\ar@{.}[d]&\\
\ar@{.}[d]\ar@{.}[r]  &\ar@{.}[d]\ar@{->}[r]  &\ar@{.}[d]\ar@{.}[r]\ar@{->}[u]  &\ar@{.}[d]&\\
\ar@{.}[d]\ar@{.}[r]  &\ar@{.}[d]\ar@{.}[r]\ar@{->}[u]  &\ar@{.}[d]\ar@{.}[r]  &\ar@{.}[d]&\\
\ar@{.}[d]\ar@{->}[r]  &\ar@{.}[d]\ar@{.}[r]\ar@{->}[u]  &\ar@{.}[d]\ar@{.}[r]  &\ar@{.}[d]&\\
\ar@{.}[r]_{(0,0) \quad \ \quad}\ar@{->}[u]  &\ar@{.}[r]  &\ar@{.}[r]  &&
}$$

There are again  $\binom{m+n}{n}$ such paths, or shuffles. (Notice, however, that there is a shift in counting for these two points of view, caused by the fact that we can consider a linear tree as an order of $n\geq0$ vertices or of $n+1>0$ edges.)
If we picture $A=(0<1< \ldots <m)$ as a linear tree with $m+1$ edges and $m$ vertices, and similarly for $B$, then this path through the grid is another linear tree whose edges are now named $(i,j)$ with $0\leq i \leq m$ and $0\leq j \leq n$, and where each vertex looks like one of 
\begin{equation*}
\vcenter{\xymatrix@R=10pt@C=14pt{
&\\
&*{ \bullet} \ar@{-}[u]_{(i+1,j)}&& \\
&*{} \ar@{-}[u]_{(i,j)} \\
}}
 \ \  \text{ or } \ \ 
\vcenter{\xymatrix@R=10pt@C=14pt{
&\\
&*{ \circ} \ar@{-}[u]_{(i,j+1)}&& \\
&*{} \ar@{-}[u]_{(i,j)} \\
}}\end{equation*}  
We call these shuffles of linear trees \textit{classical}.

\begin{example}
 For the two linear orders 
\begin{equation*}
A: \vcenter{\xymatrix@R=10pt@C=14pt{
&\\
&*{\ \, \bullet z} \ar@{-}[u]^{3}&&&&& \\
&*{\ \, \bullet y} \ar@{-}[u]^{2} \\
&*{\ \, \bullet x} \ar@{-}[u]^{1} \\
&*{} \ar@{-}[u]^{0} \\
}}
 \ \ \ 
B: \vcenter{\xymatrix@R=10pt@C=14pt{
&\\
&*{\ \, \circ v} \ar@{-}[u]^{c} \\
&*{\ \, \circ u} \ar@{-}[u]^{b} \\
&*{} \ar@{-}[u]^{a} \\
}}
\end{equation*}  

 \noindent
 here are three ways of viewing one and the same shuffle:
\begin{equation*}
 \vcenter{\xymatrix@R=10pt@C=14pt{
&\\
&*{\circ} \ar@{-}[u]^{(3,c)}&& \\
&*{\bullet} \ar@{-}[u]^{(3,b)} \\
&*{\bullet} \ar@{-}[u]^{(2,b)} \\
&*{\circ} \ar@{-}[u]^{(1,b)} \\
&*{\bullet} \ar@{-}[u]^{(1,a)} \\
&*{} \ar@{-}[u]^{(0,a)} \\
}}
\text{ or } \quad  \quad
\vcenter{\xymatrix@M=0pt@C=12mm{
\ar@{.}[d]\ar@{.}[r]  &\ar@{.}[d]\ar@{.}[r] & \ar@{.}[d]\ar@{.}[r]^{ \quad \quad \quad (3,c)}  &\\
\ar@{.}[d]\ar@{.}[r]  &\ar@{.}[d]\ar@{->}[r]^{(1,b) \quad \quad \quad} & \ar@{.}[d]\ar@{->}[r]_{(2,b) \quad \quad (3,b)}   &\ar@{.}[d]\ar@{->}[u]   &\\
\ar@{->}[r]_{(0,a) \quad \quad \quad}  &\ar@{.}[r]_{(1,a) \quad \quad \quad}\ar@{->}[u]  &  \ar@{.}[r]  &\\
}}
\end{equation*}  
\noindent
  or the linear order $x<u<y<z<v$.
  
\end{example}

\bigskip

We now wish to extend this notion of shuffle to trees.

\subsection{Shuffles of trees}\label{ss:ShufflesOfTrees}

\begin{definition}\label{Def1}
Let $S$ and $T$ be two trees.
A shuffle of $S$ and $T$ is a tree $A$ for which the following four conditions hold:
\begin{enumerate}
\item The edges of $A$ are labelled by pairs $(s,t)$ where $s$ and $t$ are edges of $S$ and $T$, respectively.
\item The root of $A$ is labelled by the pair $(r_S, r_T)$ of root edges of $S$ and $T$.
\item The set of labels of the leaves of $A$ is in bijective correspondence with the cartesian product of the leaves of $S$ and those of $T$.
\item If $(s,t)$ is the label of an edge of $A$ which is not a leaf, then either the incoming edges of the vertex above $(s,t)$ are labelled $(s_1,t), \ldots, (s_m,t)$ where $s_1, \ldots, s_m$ are the incoming edges of the vertex above $s$ in $S$; or these incoming edges are labelled $(s,t_1), \ldots, (s,t_n)$ where $t_1, \ldots, t_n$ are the incoming edges of the vertex above $t$ in $T$.
\end{enumerate}
Notation: We will write $Sh(S,T)$ for the set of shuffles of the trees $S$ and $T$.
\end{definition}

Here is a picture to illustrate the last condition of the definition:
\begin{equation*}
S:\vcenter{\xymatrix@R=10pt@C=14pt{
&&\\
&*{ \circ}\ar@{}[u]|{\cdots} \ar@{-}[ul]^{s_1}\ar@{-}[ur]_{s_m}&& \\
&*{} \ar@{-}[u]_{s} \\
}}
 \ \  \ \ 
T: \vcenter{\xymatrix@R=10pt@C=14pt{
&&\\
&*{ \bullet}\ar@{}[u]|{\cdots} \ar@{-}[ul]^{t_1}\ar@{-}[ur]_{t_n}&& \\
&*{} \ar@{-}[u]_{t} \\
}}\end{equation*}  

\begin{equation*}
A:\vcenter{\xymatrix@R=10pt@C=14pt{
&&\\
&*{ \circ}\ar@{}[u]|{\cdots} \ar@{-}[ul]^{(s_1,t)}\ar@{-}[ur]_{(s_m,t)}&& \\
&*{} \ar@{-}[u]_{(s,t)} \\
}}
 \ \text{ or }  \ \ 
A:\vcenter{\xymatrix@R=10pt@C=14pt{
&&\\
&*{ \bullet}\ar@{}[u]|{\cdots} \ar@{-}[ul]^{(s,t_1)}\ar@{-}[ur]_{(s,t_n)}&& \\
&*{} \ar@{-}[u]_{(s,t)} \\
}}\end{equation*}

\begin{example}\label{ExampleWith14}
Here is a complete list of the 14 shuffles of the following two trees

\begin{equation*}
S:\vcenter{\xymatrix@R=10pt@C=14pt{
&&\\
*{\circ} \ar@{-}[u]_2 \\
*{\circ} \ar@{-}[u]_1 \\
*{} \ar@{-}[u]_0 \\
}}
 \  \text{ and } \  \ \ 
T:\vcenter{\xymatrix@R=10pt@C=14pt{
&&&\\
*{\bullet}\ar@{-}[u]^c&&*{\bullet}\ar@{-}[u]_e\\
&*{ \bullet} \ar@{-}[ul]^b\ar@{-}[ur]_d&& \\
&*{} \ar@{-}[u]_a \\
}}\end{equation*}  
\noindent taken from \cite{MW2}, Example 9.4.

\begin{center}
$\xyR{5pt}\xyC{5pt}\xymatrix{*{}\ar@{-}[d] &  & *{}\ar@{-}[d] &  & *{}\ar@{-}[d] &  & *{}\ar@{-}[d]\\
*{\bullet}\ar@{-}[dr] &  & *{\bullet}\ar@{-}[dl] &  & *{\bullet}\ar@{-}[dr] &  & *{\bullet}\ar@{-}[dl]\\
 & *{\bullet}\ar@{-}[drr] &  &  &  & *{\bullet}\ar@{-}[dll]\\
 &  &  & *{\circ}\ar@{-}[d]_{}\\
 &  &  & *{\circ}\ar@{-}[d]\\
 &  &  & *{}\\
}
$~~~~~$\xyR{5pt}\xyC{5pt}\xymatrix{*{}\ar@{-}[d] &  & *{}\ar@{-}[d] &  & *{}\ar@{-}[d] &  & *{}\ar@{-}[d]\\
*{\bullet}\ar@{-}[dr] &  & *{\bullet}\ar@{-}[dl] &  & *{\bullet}\ar@{-}[dr] &  & *{\bullet}\ar@{-}[dl]\\
 & *{\circ}\ar@{-}[drr]_{} &  &  &  & *{\circ}\ar@{-}[dll]^{}\\
 &  &  & *{\bullet}\ar@{-}[d]_{}\\
 &  &  & *{\circ}\ar@{-}[d]\\
 &  &  & *{}\ar@{-}[]\\
 }
$~~~~~$\xyR{5pt}\xyC{5pt}\xymatrix{*{}\ar@{-}[d] &  & *{}\ar@{-}[d] &  & *{}\ar@{-}[dr] &  & *{}\ar@{-}[dl]\\
*{\bullet}\ar@{-}[dr] &  & *{\bullet}\ar@{-}[dl] &  &  & *{\circ}\ar@{-}[d]_{}\\
 & *{\circ}\ar@{-}[drr]_{} &  &  &  & *{\bullet}\ar@{-}[dll]^{}\\
 &  &  & *{\bullet}\ar@{-}[d]_{}\\
 &  &  & *{\circ}\ar@{-}[d]\\
 &  &  & *{}\\
 }
$

$\xyR{5pt}\xyC{5pt}\xymatrix{*{}\ar@{-}[dr] &  & *{}\ar@{-}[dl] &  & *{}\ar@{-}[d] &  & *{}\ar@{-}[d]\\
 & *{\circ}\ar@{-}[d]_{} &  &  & *{\bullet}\ar@{-}[dr] &  & *{\bullet}\ar@{-}[dl]\\
 & *{\bullet}\ar@{-}[drr]_{} &  &  &  & *{\circ}\ar@{-}[dll]^{}\\
 &  &  & *{\bullet}\ar@{-}[d]_{}\\
 &  &  & *{\circ}\ar@{-}[d]\\
 &  &  & *{}\\
 }
$~~~~~$\xyR{5pt}\xyC{5pt}\xymatrix{*{}\ar@{-}[dr] &  & *{}\ar@{-}[dl] &  & *{}\ar@{-}[dr] &  & *{}\ar@{-}[dl]\\
 & *{\circ}\ar@{-}[d]_{} &  &  &  & *{\circ}\ar@{-}[d]^{}\\
 & *{\bullet}\ar@{-}[drr]_{} &  &  &  & *{\bullet}\ar@{-}[dll]^{}\\
 &  &  & *{\bullet}\ar@{-}[d]_{}\\
 &  &  & *{\circ}\ar@{-}[d]\\
 &  &  & *{}\\
 }
$~~~~~$\xyR{5pt}\xyC{5pt}\xymatrix{*{}\ar@{-}[d] &  & *{}\ar@{-}[d] &  & *{}\ar@{-}[d] &  & *{}\ar@{-}[d]\\
*{\bullet}\ar@{-}[dr] &  & *{\bullet}\ar@{-}[dl] &  & *{\bullet}\ar@{-}[dr] &  & *{\bullet}\ar@{-}[dl]\\
 & *{\circ}\ar@{-}[d]_{} &  &  &  & *{\circ}\ar@{-}[d]_{}\\
 & *{\circ}\ar@{-}[drr] &  &  &  & *{\circ}\ar@{-}[dll]\\
 &  &  & *{\bullet}\ar@{-}[d]\\
 &  &  & *{}\\
 }
$

$\xyR{5pt}\xyC{5pt}\xymatrix{*{}\ar@{-}[d] &  & *{}\ar@{-}[d] &  & *{}\ar@{-}[dr] &  & *{}\ar@{-}[dl]\\
*{\bullet}\ar@{-}[dr] &  & *{\bullet}\ar@{-}[dl] &  &  & *{\circ}\ar@{-}[d]_{}\\
 & *{\circ}\ar@{-}[d]_{} &  &  &  & *{\bullet}\ar@{-}[d]\\
 & *{\circ}\ar@{-}[drr] &  &  &  & *{\circ}\ar@{-}[dll]\\
 &  &  & *{\bullet}\ar@{-}[d]\\
 &  &  & *{}\\
 }
$~~~~~$\xyR{5pt}\xyC{5pt}\xymatrix{*{}\ar@{-}[d] &  & *{}\ar@{-}[d] &  & *{}\ar@{-}[dr] &  & *{}\ar@{-}[dl]\\
*{\bullet}\ar@{-}[dr] &  & *{\bullet}\ar@{-}[dl] &  &  & *{\circ}\ar@{-}[d]_{}\\
 & *{\circ}\ar@{-}[d]_{} &  &  &  & *{\circ}\ar@{-}[d]\\
 & *{\circ}\ar@{-}[drr] &  &  &  & *{\bullet}\ar@{-}[dll]\\
 &  &  & *{\bullet}\ar@{-}[d]\\
 &  &  & *{}\\
 }
$~~~~~$\xyC{5pt}\xyR{5pt}\xymatrix{*{}\ar@{-}[dr] &  & *{}\ar@{-}[dl] &  & *{}\ar@{-}[d] &  & *{}\ar@{-}[d]\\
 & *{\circ}\ar@{-}[d] &  &  & *{\bullet}\ar@{-}[dr] &  & *{\bullet}\ar@{-}[dl]\\
 & *{\bullet}\ar@{-}[d] &  &  &  & *{\circ}\ar@{-}[d]\\
 & *{\circ}\ar@{-}[drr] &  &  &  & *{\circ}\ar@{-}[dll]\\
 &  &  & *{\bullet}\ar@{-}[d]\\
 &  &  & *{}\\
 }
$

$\xyC{5pt}\xyR{5pt}\xymatrix{*{}\ar@{-}[dr] &  & *{}\ar@{-}[dl] &  & *{}\ar@{-}[dr] &  & *{}\ar@{-}[dl]\\
 & *{\circ}\ar@{-}[d] &  &  &  & *{\circ}\ar@{-}[d]\\
 & *{\bullet}\ar@{-}[d] &  &  &  & *{\bullet}\ar@{-}[d]\\
 & *{\circ}\ar@{-}[drr] &  &  &  & *{\circ}\ar@{-}[dll]\\
 &  &  & *{\bullet}\ar@{-}[d]\\
 &  &  & *{}\\
 }
$~~~~~$\xyR{5pt}\xyC{5pt}\xymatrix{*{}\ar@{-}[dr] &  & *{}\ar@{-}[dl] &  & *{}\ar@{-}[dr] &  & *{}\ar@{-}[dl]\\
 & *{\circ}\ar@{-}[d]_{} &  &  &  & *{\circ}\ar@{-}[d]_{}\\
 & *{\bullet}\ar@{-}[d]_{} &  &  &  & *{\circ}\ar@{-}[d]\\
 & *{\circ}\ar@{-}[drr] &  &  &  & *{\bullet}\ar@{-}[dll]\\
 &  &  & *{\bullet}\ar@{-}[d]\\
 &  &  & *{}\\
 }
$~~~~~$\xyC{5pt}\xyR{5pt}\xymatrix{*{}\ar@{-}[dr] &  & *{}\ar@{-}[dl] &  & *{}\ar@{-}[d] &  & *{}\ar@{-}[d]\\
 & *{\circ}\ar@{-}[d]_{} &  &  & *{\bullet}\ar@{-}[dr] &  & *{\bullet}\ar@{-}[dl]\\
 & *{\circ}\ar@{-}[d] &  &  &  & *{\circ}\ar@{-}[d]_{}\\
 & *{\bullet}\ar@{-}[drr] &  &  &  & *{\circ}\ar@{-}[dll]\\
 &  &  & *{\bullet}\ar@{-}[d]\\
 &  &  & *{}\\
 }
$

$\xyC{5pt}\xyR{5pt}\xymatrix{*{}\ar@{-}[dr] &  & *{}\ar@{-}[dl] &  & *{}\ar@{-}[dr] &  & *{}\ar@{-}[dl]\\
 & *{\circ}\ar@{-}[d]_{} &  &  &  & *{\circ}\ar@{-}[d]_{}\\
 & *{\circ}\ar@{-}[d] &  &  &  & *{\bullet}\ar@{-}[d]_{}\\
 & *{\bullet}\ar@{-}[drr] &  &  &  & *{\circ}\ar@{-}[dll]\\
 &  &  & *{\bullet}\ar@{-}[d]\\
 &  &  & *{}\\
 }
$~~~~~$\xyC{5pt}\xyR{5pt}\xymatrix{*{}\ar@{-}[dr] &  & *{}\ar@{-}[dl] &  & *{}\ar@{-}[dr] &  & *{}\ar@{-}[dl]\\
 & *{\circ}\ar@{-}[d] &  &  &  & *{\circ}\ar@{-}[d]\\
 & *{\circ}\ar@{-}[d] &  &  &  & *{\circ}\ar@{-}[d]\\
 & *{\bullet}\ar@{-}[drr] &  &  &  & *{\bullet}\ar@{-}[dll]\\
 &  &  & *{\bullet}\ar@{-}[d]\\
 &  &  & *{}\\
 }
$
\end{center}
\end{example}

\begin{remark}
Notice that a planar structure on each of two trees $S$ and $T$ induces a planar structure on each of their shuffles, as illustrated by the way we have drawn the trees in the example above.
\end{remark}

%%%%%%%%%%%%%%%%%%%%%%%%%%%%%%%%%%%%%%%%%%%%%%%%%%%%%%%%%%%%%%%%%%%%%%%%%%
% 
%%%%%%%%%%%%%%%%%%%%%%%%%%%%%%%%%%%%%%%%%%%%%%%%%%%%%%%%%%%%%%%%%%%%%%%%%%

\section{Characterizations of shuffles of trees}

In the examples presented in the previous section, one observes that each branch of a shuffle of two trees $S$ and $T$ is given by a classical shuffle of a branch of $S$ and one in $T$. We will start this section by showing that in fact this  provides an equivalent definition of the notion of shuffle.

\begin{prop}\label{ShuffleOfBranches}
Let $S$ and $T$ be two trees. A shuffle of $S$ and $T$ is a tree $A$ satisfying the conditions (1)-(3) of Definition~\ref{Def1} (on the labelling of the edges, the root and the leaves), as well as,
\begin{enumerate}
\item[(4$'$)] For any two leaves $s$ of $S$ and $t$ of $T$, the branch from the leaf $(s,t)$ of $A$ down to its root is a classical shuffle, of the two branches from $s$ down to the root in $S$ and from $t$ down to the root in $T$.
\end{enumerate}
\end{prop}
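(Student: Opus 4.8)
The plan is to fix trees $S$ and $T$ and a tree $A$ satisfying (1)--(3), and to prove that conditions (4) and (4$'$) are equivalent. Throughout I write $\Lambda_S$, $\Lambda_T$ for the sets of leaves of $S$ and $T$; for an edge $s$ of $S$ I write $\Lambda_S(s)$ for the set of leaves $\lambda$ with $s\leq\lambda$, and similarly for $T$. Since condition (3) makes the labelling a bijection between the leaves of $A$ and $\Lambda_S\times\Lambda_T$, every pair of leaves occurs exactly once as a leaf-label.

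The implication (4)$\Rightarrow$(4$'$) I would prove by walking up a single branch. Given leaves $s$ of $S$ and $t$ of $T$, let $(s,t)$ be the corresponding leaf of $A$ and consider the branch from the root $(r_S,r_T)$ up to it. At each vertex along this branch, with output edge currently labelled $(a,b)$, condition (4) says the incoming edges are labelled either $(a_1,b),\dots,(a_m,b)$ or $(a,b_1),\dots,(a,b_n)$; following the branch we select one of them, so the step replaces $a$ by one of its children (keeping $b$) or $b$ by one of its children (keeping $a$). Thus each step alters exactly one coordinate, replacing it by a child; reading off the labels produces a monotone staircase in $\downarrow s\times\downarrow t$ whose first-coordinate projection runs through the branch $\downarrow s$ of $S$ and whose second-coordinate projection runs through $\downarrow t$ of $T$. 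Since a staircase from $(r_S,r_T)$ to $(s,t)$ is precisely a maximal chain, this is a classical shuffle, which is (4$'$).

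The substantive direction is (4$'$)$\Rightarrow$(4), and here I would argue by induction on the height of the subtree of $A$ lying above an edge $e=(s,t)$, proving the combined statement that the leaves of $A$ above $e$ are labelled bijectively by $\Lambda_S(s)\times\Lambda_T(t)$ and that (4) holds at every vertex above $e$. The first, easy, use of (4$'$) is local: choosing any leaf above an incoming edge $e_i$ of the vertex $v$ above $e$, the single step from $e$ to $e_i$ is one step of a classical shuffle, so $e_i$ is labelled either $(s',t)$ with $s'$ a child of $s$ or $(s,t')$ with $t'$ a child of $t$. Applying the inductive hypothesis to each $e_i$ identifies the leaf-labels above $e_i$ with a full rectangle $\Lambda_S(s')\times\Lambda_T(t)$ or $\Lambda_S(s)\times\Lambda_T(t')$; the injectivity part of (3) forces these rectangles (for distinct $e_i$) to be disjoint, and two rectangles of different types always meet, so all incoming edges must be of the same type and their children must be distinct.

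The main obstacle is the remaining \emph{completeness}: that the children occurring at $v$ exhaust all children of $s$ (resp.\ $t$), equivalently that the disjoint rectangles fill out all of $\Lambda_S(s)\times\Lambda_T(t)$. This is exactly surjectivity of the labelling on the leaves above $e$, and it is the delicate point, because a priori the individually-chosen staircases along different branches need not cohere. I would extract it from the surjectivity part of (3): a missing child would leave some leaf-pair $(\lambda,\mu)$ with $s\leq\lambda$, $t\leq\mu$ unrealised above $e$, and one must show that its branch in $A$ nevertheless passes through $e$, contradicting (3). Controlling which edge of $A$ a given leaf's branch passes through is the crux of the argument and the step I expect to require the most care; once it is in place, the uniform type together with completeness is precisely condition (4) at $v$, closing the induction.
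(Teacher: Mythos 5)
The direction (4)$\Rightarrow$(4$'$) is fine and agrees with the paper, and your induction skeleton for the converse (the full-rectangle strengthening, and the disjointness argument ruling out mixed types) is sound as far as it goes. But the proposal stops exactly where the content of the proposition lies: you name the completeness step -- ``controlling which edge of $A$ a given leaf's branch passes through'' -- as the crux and do not prove it. This is a genuine gap, and worse, under the reading you have adopted (an abstract tree $A$ whose edges merely carry labels subject to (1)--(3)), the missing step is not just delicate but false, so no amount of care will close it. Take $S=L_1$ with edges $0<1$ and $T=C_2$ with root edge $a$ and leaves $b,c$, and let $A$ be the abstract tree whose root edge is labelled $(0,a)$, whose root vertex has two incoming edges labelled $(1,a)$ and $(0,b)$, with a single leaf labelled $(1,c)$ above $(1,a)$ and a single leaf labelled $(1,b)$ above $(0,b)$. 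Then (1)--(3) hold, and both branches are classical shuffles, so (4$'$) holds; but (4) fails at the root vertex. Here the leaf labelled $(1,b)$ simply sits on the wrong side of the tree, which is precisely the configuration your unproven step would have to exclude.

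What is needed is the convention, which the paper states explicitly just after this proposition and uses silently in its proof: the labelling identifies $E(A)$ with a subset of $E(S)\times E(T)$ in such a way that the tree order of $A$ is the induced product order, so that a branch of $A$ is exactly a down-segment $\downarrow(\ell,\ell')$. This compatibility is automatic from condition (4) of Definition~\ref{Def1}, but it must be imposed when one works with (4$'$) instead; it rules out the example above. Once it is in place, your crux becomes immediate -- if $(s,t)\le(\lambda,\mu)$ in the product order and both lie in $A$, then the edge $(s,t)$ lies on the branch to the leaf $(\lambda,\mu)$ -- and the whole height induction collapses to the paper's short local argument at a single edge $(s,t)$: for mixed types, a leaf $(\ell_i,\ell_j)$ with $\ell_i\ge s_i$ and $\ell_j\ge t_j$ would lie above the two incomparable edges $(s_i,t)$ and $(s,t_j)$, contradicting the fact that $A$ is treelike; for completeness, the branch to a leaf $(\ell_i,\ell)$ with $\ell_i\ge s_i$ and $\ell\ge t$ passes through $(s,t)$, and by (4$'$) its next edge is either $(s_i,t)$ or some $(s,t_j)$, the latter being excluded, so every $(s_i,t)$ occurs above $(s,t)$. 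In short: your plan identifies the right obstacle, but resolving it requires the order-theoretic identification of $A$ inside $E(S)\times E(T)$ rather than a finer analysis of branches of an abstract labelled tree, and with that identification no induction is needed at all.
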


\begin{proof}
First note that for linear trees, both Definition~\ref{Def1} and Proposition~\ref{ShuffleOfBranches} are equivalent descriptions of classical shuffles.

For general trees, given the other conditions, it is then clear that Condition~(4) implies Condition~(4$'$) .

For the converse implication, let us consider an edge $(s,t)$ in $A$, where the edges immediately above $s$ are denoted $s_i$'s and the ones above $t$ are denoted $t_j$'s,
 as in the picture below Definition~\ref{Def1}.
Because of Condition~(4$'$), all edges above $(s,t)$ must be of the form $(s_i,t)$ or $(s,t_j)$. We have to show that no two different types occur, and all of the same type occur.
For the first property, suppose that some $(s_i,t)$ and some $(s,t_j)$  are successors of $(s,t)$. Let $\ell_i$ be a leaf above $s_i$ in $S$, and $\ell_j$ a leaf above $t_j$ in $T$. Then 
by Condition~(3), $(\ell_i,\ell_j)$ is in $A$. But then $A$ is not a treelike order.
For the second property, suppose now that one has only edges of type $(s_i,t)$ above $(s,t)$. Choose a leaf $\ell_i$ above each $s_i$, and a leaf $\ell$ above $t$. Then $(\ell_i, \ell)$ are all in $A$
by Condition~(3), so by Condition~(4$'$), all $(s_i,t)$ occur above $(s,t)$ (since as we have just seen no $(s,t_j)$ can).
\end{proof}

\begin{remark}\label{rem:Extension}
Conversely, given two trees $S$ and $T$, and two branches $b_S$ and $b_T$ inside them, it is not difficult to see that any classical shuffle of $b_S$ and $b_T$ can be extended to a shuffle of $S$ and $T$.
The proof is left to the reader.
\end{remark}

The set of edges of a shuffle $A$ is a subset of the cartesian product $E(S) \times E(T)$ of the set of edges of $S$ and of $T$, and the partial order induced by the tree structure of $A$ is the one induced by the partial orders on the edges of $S$ and $T$, respectively. These two partial orders are all treelike, of course. The notion of shuffle can also be described in these terms. This way of defining shuffles generalizes the classical notion of a shuffle of two linear trees $L$ and $M$ as a maximal linear suborder in $L \times M$, cf. Subsection~\ref{ss:ShufflesOfLinearTrees} above.

\begin{prop}\label{Maximality}
Let $S$ and $T$ be two trees. A shuffle of $S$ and $T$ is a subset $A \subset E(S) \times E(T)$ for which 
\begin{enumerate}
\item The partial order on $ E(S) \times E(T)$ restricts to a treelike partial order on $A$.
\item The largest elements in this partial order on $A$ are in bijective correspondence with the pairs $(s,t)$ where $s$ and $t$ range over the leaves of $S$ and $T$, respectively.
\item The set $A$ is maximal with these properties (1)-(2).
\end{enumerate}
\end{prop}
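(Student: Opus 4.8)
The plan is to prove the two implications separately: that every shuffle (in the sense of Definition~\ref{Def1}, equivalently Proposition~\ref{ShuffleOfBranches}) satisfies (1)--(3), and conversely that any $A\subseteq E(S)\times E(T)$ satisfying (1)--(3) is a shuffle. Throughout I identify a shuffle $A$ with its set of edge-labels, a subset of $E(S)\times E(T)$, and I use Proposition~\ref{ShuffleOfBranches} in the form: the branch of $A$ below a leaf $(\ell_s,\ell_t)$ is a classical shuffle, i.e.\ a \emph{maximal} chain in the rectangle $\downarrow\!\ell_s\times\downarrow\!\ell_t$.

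For the first implication, I would first check that the tree order on a shuffle $A$ coincides with the order induced from the product $E(S)\times E(T)$; the only non-trivial point is that two tree-incomparable labels cannot be product-comparable, which holds because at the branch point separating them the two labels already differ incomparably in a single fixed coordinate, while any down-segment of a tree is a chain. This gives (1), and (2) is immediate, since the maximal elements of a tree are its leaves, which by Definition~\ref{Def1}(3) are exactly the pairs of leaves. Maximality (3) is then the assertion that $A$ cannot be enlarged: any extra label $(s,t)$ would be non-maximal, hence below some leaf-pair $(\ell_s,\ell_t)$, and treelikeness would force $(s,t)$ into the chain $\downarrow_A(\ell_s,\ell_t)$; but that chain is already a maximal chain in its rectangle by Proposition~\ref{ShuffleOfBranches}, so $(s,t)\in A$, a contradiction.

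For the converse, suppose $A$ satisfies (1)--(3). I would reduce to showing that $A$ has root $(r_S,r_T)$ and that \emph{every covering step of $A$ increases exactly one coordinate to a child} (``no diagonal edges''); granting this, each branch is a saturated chain from $(r_S,r_T)$ to a leaf-pair, hence a classical shuffle, and Proposition~\ref{ShuffleOfBranches} then identifies $A$ as a shuffle. The root condition is easy: if the minimum of $A$ is not $(r_S,r_T)$, then $(r_S,r_T)\notin A$ and adjoining it (it lies below everything) preserves (1)--(2), contradicting (3). The heart of the argument is a \emph{no-mixing lemma}: no label $(p,q)$ of $A$ can have both an $S$-type cover $(p_1,q)$ and a $T$-type cover $(p,q_1)$ in $A$. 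Indeed, choosing a leaf $\ell_s\ge p_1$ of $S$ and a leaf $\ell_t\ge q_1$ of $T$, condition (2) puts the leaf-pair $(\ell_s,\ell_t)$ in $A$, and then the incomparable labels $(p_1,q)$ and $(p,q_1)$ both lie in its branch $\downarrow_A(\ell_s,\ell_t)$, contradicting that this branch is a chain.

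Finally I would derive ``no diagonal edges'' from maximality via the no-mixing lemma. If $(p,q)\lessdot_A(\hat p,\hat q)$ with $p<\hat p$ and $q<\hat q$, let $p^+$ and $q^+$ be the children of $p$ and $q$ on the way to $\hat p$ and $\hat q$; both $(p^+,q)$ and $(p,q^+)$ lie strictly between $(p,q)$ and its cover, hence are not in $A$. By the no-mixing lemma $(p,q)$ has no $T$-cover or no $S$-cover; in the first case I would adjoin $(p^+,q)$, in the second $(p,q^+)$. The crux, and the step I expect to be the main obstacle, is to check that this single adjunction preserves treelikeness, i.e.\ that it does not make some \emph{other} branch fail to be a chain. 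Here one shows that a branch is disturbed by $(p^+,q)$ only if it leaves $(p,q)$ in the $T$-direction, which is precisely what the absence of a $T$-cover rules out (and symmetrically for $(p,q^+)$); the verification again rests on the fact that any two labels below a common leaf-pair are comparable. This contradicts (3), proving that $A$ has no diagonal edges, and hence that $A$ is a shuffle.
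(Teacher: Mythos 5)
Your proof is correct, and while your forward direction is essentially the paper's own argument (each branch of a shuffle is a maximal chain in its rectangle, so an adjoined pair would properly enlarge such a chain — your use of condition (2) to place the adjoined pair below a leaf pair is in fact slightly more careful than the paper's), your converse takes a genuinely different route. The paper argues by induction on the trees: it verifies condition (4) of Definition~\ref{Def1} at the root — either all $(s_i,r_T)$ or all $(r_S,t_j)$ lie in $A$, since $(s,r_T)$ and $(r_S,t)$ with $s\neq r_S$, $t\neq r_T$ are incomparable yet below a common leaf pair — and then shows that each restriction $A\cap(E(S_i)\times E(T))$ is again maximal in $E(S_i)\times E(T)$, so that induction on the size of the trees finishes the proof. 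You instead work globally with cover relations: your no-mixing lemma is exactly the paper's root observation transplanted to an arbitrary element $(p,q)$, and maximality is used to exclude diagonal covers by adjoining $(p^+,q)$ or $(p,q^+)$ according to which pure type of cover is absent; saturation of the branches then lets you conclude via Proposition~\ref{ShuffleOfBranches} rather than via Definition~\ref{Def1}. Both proofs rest on the same two mechanisms (incomparable pure-type elements below a common leaf pair contradict treelikeness, and maximality is exploited by adjoining one well-chosen element), but the paper's induction mirrors the recursive description of shuffles in Proposition~\ref{InductionProperty} and yields as a byproduct that maximality passes to the subtree restrictions, whereas your argument is induction-free, stays entirely inside the poset $E(S)\times E(T)$, and isolates precisely where each hypothesis enters (treelikeness plus condition (2) for no-mixing; maximality only for the root and for saturation). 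The crux you flag — that adjoining $(p^+,q)$ might spoil some other branch — is resolved correctly: any $z\in A$ above $(p,q)$ and sharing an upper bound in $A$ with $(p^+,q)$ has first coordinate comparable with $p^+$, hence is either $\geq(p^+,q)$ or of the form $(p,b)$ with $b>q$, and the latter would force a $T$-cover of $(p,q)$ in $A$, which is exactly what your case hypothesis excludes.
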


\begin{remark}
Conditions~(1) and~(2) together imply $A$ always contains $(r_S, r_T)$. To see the effect of Condition~(2), consider
the trees
\begin{equation*}
S:\vcenter{\xymatrix@R=10pt@C=14pt{
&&&\\
&*{ \circ} \ar@{-}[ul]^b\ar@{-}[ur]_c&& \\
&*{} \ar@{-}[u]_a \\
}}
 \ \text{ and }  \ \ 
T:\vcenter{\xymatrix@R=10pt@C=14pt{
&\\
&*{\bullet} \ar@{-}[u]_1 \\
&*{} \ar@{-}[u]_0 \\
}}\end{equation*}  
Then 
the tree $A$ pictured as 
\begin{equation*}
\vcenter{\xymatrix@R=10pt@C=14pt{
&&&\\
&&*{\ast}\ar@{-}[u]_{(c,1)}\\
&*{\ast} \ar@{-}[ul]^{(b,0)} \ar@{-}[ur]_{(a,1)}&& \\
&*{} \ar@{-}[u]_{(a,0)} \\
}}
\end{equation*}  
is maximal with Condition~(1), since adding either the edge $(b,1)$ or the edge $(c,0)$ will result in a partial order which is not treelike. This shows that Condition~(2) is necessary.
\end{remark}

\begin{proof}
Suppose $A$ is a shuffle in the sense of Proposition~\ref{ShuffleOfBranches}. Then, by definition, the partial order $A$ is treelike and the pairs of leaves are exactly the largest elements of $A$.
Only the maximality remains to be shown. Recall that Condition (4$'$) implies that for every pair of leaves $(\ell,\ell')$ the branch $\downarrow (\ell,\ell')$ is a maximal linear order, that 
is $\downarrow (\ell,\ell')$ is of the form $\{(r_S,r_T) <a_1 < \ldots <a_n < (\ell, \ell') \}$ such that there is no $b \in E(S) \times E(T)$ with $a_i<b<a_{i+1}$.
Suppose now there exists some edge $(s,t)$ not in $A$, such that $A \cup \{(s,t)\}$ is a tree. Consider a leaf $(\ell,\ell')$ above $(s,t)$. Then  $\downarrow (\ell,\ell')$ (in $A \cup \{(s,t)\}$) 
is the set $\{(r_S,r_T) ,a_1 , \ldots ,a_n , (\ell, \ell'), (s,t)\}$ with a total order, which is impossible.

Conversely, consider a subset $A \subset E(S) \times E(T)$, maximal with properties (1) and (2). Let us show it is a shuffle in the sense of Definition~\ref{Def1}.
All conditions are obvious except the last one. We first prove this condition for the root, and then proceed by induction.

Suppose $S$ has edges $s_1, \ldots, s_m$ above its root edge $r_S$ and $T$ has $t_1, \ldots, t_n$ above its root edge $r_T$. 
Write $S_i=\{s \in S \, ; \, s \geq s_i\}$ and $T_j=\{t \in T \, ; \, t \geq t_j\}$ for the corresponding subtrees.
We know $(r_S, r_T)$ is in $A$. Suppose that some $(s_i, r_T)$ is not in $A$. The maximality condition implies that $A \cup \{(s_i,r_T)\}$ is not a tree, that is we can have 
either two incomparable elements $a$ and $b$ in $A$ below $(s_i, r_T)$, or $a=(a_S, a_T)$ and $b=(b_S, b_T)$ in $A$ with $(s_i, r_T)<a$ and $b<a$ and $(s_i, r_T)$ not comparable with $b$. 
The first option is impossible: 
consider a leaf $(\ell, \ell')$ above $(s_i, r_T)$; this leaf is in $A$ (as are all the leaves) and is above the incomparable elements $a$ and $b$, which contradicts $A$ being a tree.
The second option implies that $a_S$ is in $S_i$, and thus $b_S$ is in $S_i$ or is the root of $S$. That means $b$ and $(s_i, r_T)$ can be incomparable only if $b_S$ is the root of $S$ (and then $b_T$ is not the root of $T$).
This proves that for each $i$, either $(s_i, r_T)$ is in $A$ or some $(r_S,t)$ (with $t \neq r_T$) is in $A$.
Symmetrically, for each $j$, either $(r_S, t_j)$ is in $A$ or some $(s,r_T)$ (with $s \neq r_S$) is in $A$.
But observe that two elements $(r_S,t)$ (with $t \neq r_T$) and $(s,r_T)$ (with $s \neq r_S$) can never be simultaneously in $A$ (as they have a common leaf above them).
Thus we get that either for each $i$, $(s_i, r_T)$ is in $A$, or  for each $j$, $(r_S, t_j)$ is in $A$.

We continue by induction. Suppose for instance, that for each $i$, $(s_i, r_T)$ is in $A$.
Let $A_i$ be the intersection of $A$ with $S_i \times T$. It suffices to prove that each $A_i$ is a maximal tree in $S_i \times T$.
If it is not maximal, we can find $x=(x_S,s_T)$ not in $A_i$ such that $A_i \cup \{x\}$ is a tree. But by the maximality condition on $A$, we know that $A \cup \{x\}$ is not a tree.
So, as before, we find elements $a$ and $b$ in $A$ such that $x<a$, $b<a$ and $x$ and $b$ incomparable. Since $x$ lies in $S_i \times T$, $a$ is in $A_i$.
Now, as $b<a$, we obtain that $b$ is either the root or lies in $A_i$. 
But $b$ cannot be the root as the root is comparable with any element. Moreover $b$ cannot be in $A_i$ because $A_i \cup \{x\}$ is a tree, which contradicts $x$ and $b$ being incomparable 
while being both below $a$.
This contradiction shows that each $A_i$ is maximal.
\end{proof}

We will conclude this section with an inductive characterization of the notion of a shuffle of two trees.
To this end, notice that a tree $S$ is either the unit tree $\eta$, or is obtained by grafting $m>0$ trees $S_1, \ldots, S_m$ onto a corolla $C_m$; in this case we will write $S=C_m[S_1, \ldots, S_m]$.

Here is a picture of a corolla $C_m$ and $S=C_m[S_1, \ldots, S_m]$.
\begin{equation*}
\vcenter{\xymatrix@R=10pt@C=14pt{
&&\\
&*{ \circ}\ar@{}[u]|{\cdots} \ar@{-}[ul]^{1}\ar@{-}[ur]_{m}&& \\
&*{} \ar@{-}[u]_{0} \\
}}
 \ \  \ \ 
\vcenter{\xymatrix@R=10pt@C=10pt{
*{}\ar@{-}[rr]&&*{}&&*{}\ar@{-}[rr]&&*{}\\
&S_1&&\cdots\cdots&&S_m&&\\
&*{}\ar@{-}[uul]\ar@{-}[uur]&&&&*{}\ar@{-}[uul]\ar@{-}[uur]\\
&&&*{ \bullet}\ar@{}[u]|{\cdots\cdots} \ar@{-}[ull]\ar@{-}[urr]&& \\
&&&*{} \ar@{-}[u] \\
}}\end{equation*}  

\begin{prop}\label{InductionProperty}
The set of shuffles $Sh(S,T)$ of two trees $S$ and $T$ satisfies the following three properties:
\begin{enumerate}
\item (symmetry) $Sh(S,T) \simeq Sh(T,S)$.
\item (unit) $Sh(S, \eta)$ and $Sh(\eta,T)$ are one-point sets.
\item (induction) If $S= C_m[S_1, \ldots, S_m]$ and $T=C_n[T_1, \ldots, T_n]$, then there is a canonical bijection
$$\theta : Sh(S,T) \to \prod_{i=1}^m Sh(S_i,T) + \prod_{j=1}^{n} Sh(S,T_j).$$
(Here $\prod$ and $+$ denote cartesian product and disjoint sum of sets.)
\end{enumerate}
\end{prop}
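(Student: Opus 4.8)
My plan is to verify the three properties in increasing order of difficulty, leaning on the characterization of shuffles as labelled trees satisfying Conditions~(1)-(4) of Definition~\ref{Def1}, and reserving the real work for the induction clause~(3). For the \emph{symmetry} property~(1), the bijection $Sh(S,T) \simeq Sh(T,S)$ should come simply from swapping the two coordinates of each label: a tree $A$ whose edges are labelled by pairs $(s,t)$ becomes a tree $A^{\mathrm{op}}$ with labels $(t,s)$, and one checks that Conditions~(1)-(4) are manifestly symmetric under this swap (Condition~(4) is a disjunction of two symmetric alternatives). For the \emph{unit} property~(2), I would use the fact that $\eta$ has a single edge $r_\eta$ which is simultaneously its root and its only leaf. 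Then Condition~(3) forces the leaves of a shuffle $A \in Sh(S,\eta)$ to be in bijection with the leaves of $S$ (paired with $r_\eta$), and since $r_\eta$ has no incoming edges, Condition~(4) can only ever be instantiated in the ``$S$-direction''; one checks that $A$ must be isomorphic to $S$ itself with each edge $s$ relabelled $(s, r_\eta)$. Hence $Sh(S,\eta)$ has exactly one element, and $Sh(\eta, T)$ likewise (or invoke symmetry).

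For the \emph{induction} property~(3), I would construct the map $\theta$ explicitly by examining the root vertex of a shuffle $A$ of $S = C_m[S_1,\dots,S_m]$ and $T = C_n[T_1,\dots,T_n]$. The root of $A$ is labelled $(r_S, r_T)$ by Condition~(2). The edges $s_1,\dots,s_m$ above $r_S$ are precisely the roots of $S_1,\dots,S_m$, and similarly $t_1,\dots,t_n$ are the roots of $T_1,\dots,T_n$. By Condition~(4) applied at the root edge, the incoming edges of the root vertex of $A$ are labelled \emph{either} $(s_1,r_T),\dots,(s_m,r_T)$ \emph{or} $(r_S,t_1),\dots,(r_S,t_n)$; these two mutually exclusive cases are exactly what produces the disjoint sum $+$. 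In the first case, the subtree of $A$ sitting above $(s_i,r_T)$ is, after relabelling, a shuffle of $S_i$ and $T$; running over $i$ gives an element of $\prod_i Sh(S_i,T)$, whence the factor $\prod_{i=1}^m Sh(S_i,T)$. The second case symmetrically yields an element of $\prod_{j=1}^n Sh(S,T_j)$.

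The step I expect to be the main obstacle is verifying that $\theta$ is a genuine bijection, i.e. that the decomposition above is both well-defined and reversible. The subtlety is that in the first case the root $s_i$ of $S_i$ is paired with the \emph{whole} root $r_T$ of $T$, not with a root of $T_j$, so I must confirm that ``the subtree of $A$ above $(s_i, r_T)$'' satisfies Conditions~(1)-(4) relative to the pair $(S_i, T)$ rather than $(S_i, T_j)$ --- in particular that its leaves biject with the leaves of $S_i$ times the leaves of $T$ (all of them, across every $T_j$), which follows because every leaf of $S_i$ lies above $s_i$ while every leaf of $T$ lies above $r_T$. For the inverse, given a tuple in $\prod_i Sh(S_i,T)$ I graft the shuffles $A_i$ (with $s_i$ reinterpreted via $S_i \hookrightarrow S$) onto a common root edge $(r_S, r_T)$ and must check the result is again a well-formed shuffle of $S$ and $T$; the compatibility of the gluing along the shared copies of $T$ is what needs care. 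I would then note that $\theta$ and this inverse construction are mutually inverse by inspection, and remark that the two summands cannot overlap because the labels of the edges just above the root determine which alternative of Condition~(4) was used.
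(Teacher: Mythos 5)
Your proposal is correct and follows essentially the same route as the paper, whose proof is a one-liner declaring (1) and (2) obvious and (3) a direct consequence of the dichotomy condition of Definition~\ref{Def1}; your write-up simply makes explicit the intended argument (apply that condition at the root vertex to get the two exclusive cases, identify the subtrees above $(s_i,r_T)$ as shuffles of $S_i$ and $T$, and invert by grafting). The one point you flag as delicate---compatibility of gluing along ``shared copies of $T$''---is in fact a non-issue, since the edge sets $E(S_i)$ are pairwise disjoint and the grafted shuffles therefore never interact.
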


\begin{proof}
The first and second properties are obvious. The last one follows from the description in Condition (3) of Definition~\ref{Def1}.
\end{proof}

\begin{remark}
Of course, these properties determine the set of shuffles uniquely up to isomorphism, in the sense that
if $F$ is an operation associating a set $F(S,T)$ to any pair of trees and $\tau$ is a bijection
$$\tau : F(S,T) \to \prod_{i=1}^m F(S_i,T) + \prod_{j=1}^{n} F(S,T_j),$$
such that $F(S, \eta)$ and $F(\eta,T)$ are singletons, then
\begin{itemize}
 \item there is a natural bijection $F(S,T) \to F(T,S)$
 \item there is a bijection $F(S,T) \to Sh(S,T)$ compatible with $\tau$ and $\theta$.
\end{itemize}
\end{remark}

\begin{remark}
From this description, we see that the set $Sh(S,T)$ carries a natural partial order, again defined by induction:
if the partial orders on shuffles of smaller trees have been defined, we order $Sh(S,T)$ by taking the product partial orders on 
$\prod Sh(S_i,T)$ and  $\prod Sh(S,T_j)$, and then ordering the two summands by declaring elements in the left hand summand to be smaller than those in the right hand one. The symmetry isomorphism in (3) reverses this order. We will discuss this partial order in more detail in Section~\ref{S:Lattice}.
\end{remark}

\begin{remark}\label{Rem:Stumps}
In the literature (cf. \cite{HHM}, for example) one also considers shuffles of trees with stumps (also referred to as bald vertices or vertices of valence zero).
From our point of view, these can be described as follows:
for two trees $S$ and $T$, possibly with stumps, let $S^0$ and $T^0$ be the subtrees obtained by pruning the stumps away. 
Then the shuffles of $S$ and $T$ are in bijective correspondence with those of $S^0$ and $T^0$. In fact, a shuffle of the former two as considered in loc. cit.
is obtained from a shuffle $A$ of the latter two by reintroducing  a stump above each pair of leaves of $S^0$ and $T^0$  which aren't both leaves in $S$ and in $T$, respectively.

Here is an example, with the three shuffles obtained from the following two trees:
$$
S:\vcenter{\xymatrix@R=8pt@C=10pt{
&&*{ \circ} \\
 &*{ \circ} \ar@{-}[ul]^2\ar@{-}[ur]_3&&\\
&*{ \circ} \ar@{-}[u]^1&& \\
&*{} \ar@{-}[u]^0 \\
}}
 \  \text{ and  }   \quad
T:\vcenter{\xymatrix@R=8pt@C=10pt{
& &&*{ \bullet}&&\\
&&*{ \bullet} \ar@{-}[ul]^b\ar@{-}[ur]_c&& \\
&&*{} \ar@{-}[u]^a \\
}}
$$

$$\vcenter{\xymatrix@R=9pt@C=9pt{
&&*{ \circ} &&*{ \bullet}  && *{\odot} &\\
&*{ \circ} \ar@{-}[ul]^{b2}\ar@{-}[ur]_{b3}&&&&*{ \circ} \ar@{-}[ul]^{c2}\ar@{-}[ur]_{c3}&&\\
&*{ \circ} \ar@{-}[u]^{b1} &&&&*{ \circ} \ar@{-}[u]_{c1}&&\\
&&&*{ \bullet} \ar@{-}[ull]^{b0}\ar@{-}[urr]_{c0}&& \\
&&&*{} \ar@{-}[u]^{a0} \\
}}
  \text{,  }  
\vcenter{\xymatrix@R=8pt@C=8pt{
&&*{ \circ} &&*{ \bullet}  && *{\odot} &\\
&*{ \circ} \ar@{-}[ul]^{b2}\ar@{-}[ur]_{b3}&&&&*{ \circ} \ar@{-}[ul]^{c2}\ar@{-}[ur]_{c3}&&\\
&&&*{ \bullet} \ar@{-}[ull]^{b1}\ar@{-}[urr]_{c1}&& \\
&&&*{ \circ} \ar@{-}[u]^{a1}&& \\
&&&*{} \ar@{-}[u]^{a0} \\
}}
  \text{and  } 
\vcenter{\xymatrix@R=8pt@C=8pt{
&&*{ \bullet} &&*{ \circ}  && *{\odot} &\\
&*{ \bullet} \ar@{-}[ul]^{b2}\ar@{-}[ur]_{c2}&&&&*{ \bullet} \ar@{-}[ul]^{b3}\ar@{-}[ur]_{c3}&&\\
&&&*{ \circ} \ar@{-}[ull]^{a2}\ar@{-}[urr]_{a3}&& \\
&&&*{ \circ} \ar@{-}[u]^{a1}&& \\
&&&*{} \ar@{-}[u]^{a0} \\
}}
$$
\end{remark}

%%%%%%%%%%%%%%%%%%%%%%%%%%%%%%%%%%%%%%%%%%%%%%%%%%%%%%%%%%%%%%%%%%%%%%%%%%
% 
%%%%%%%%%%%%%%%%%%%%%%%%%%%%%%%%%%%%%%%%%%%%%%%%%%%%%%%%%%%%%%%%%%%%%%%%%%

\section{The number of shuffles}\label{S:Number}

For two trees $S$ and $T$, we will write $sh(S,T)$ for the number of shuffles of $S$ and $T$, in contrast with the set of shuffles $Sh(S,T)$. 
This number grows very fast in the size of the trees $S$ and $T$, and it seems difficult to give some uniform closed formulas giving precise information about this number as a function of $S$ and $T$, except for some very special types of trees. In this section, we will present some simple observations concerning the number $sh(S,T)$. To begin with, from Proposition~\ref{InductionProperty} one immediately obtains the following.

\begin{prop}
The set of shuffles $Sh(S,T)$ of two trees $S$ and $T$ satisfies the following three properties:
\begin{enumerate}
\item $sh(S,T)=sh(T,S)$.
\item If $T$ is the unit tree $\eta$, $sh(S,\eta)=1$.
\item If $S= C_m[S_1, \ldots, S_m]$ and $T=C_n[T_1, \ldots, T_n]$, then
$$sh(S,T)= \prod_{i=1}^m sh(S_i,T) + \prod_{j=1}^{n} sh(S,T_j).$$
\end{enumerate}
\end{prop}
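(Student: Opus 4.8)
The final statement is the number-theoretic counterpart of Proposition~\ref{InductionProperty}, obtained by applying cardinality to each of the three set-level bijections established there. The plan is therefore to deduce all three parts directly from the corresponding clauses of Proposition~\ref{InductionProperty}, since passing from a bijection of sets to an equality of cardinalities is immediate (a bijection preserves cardinality, disjoint sum corresponds to addition, and cartesian product to multiplication).

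First I would record that $sh(S,T)$ is by definition the cardinality $|Sh(S,T)|$, so that each statement about numbers is obtained by counting the relevant set. For part~(1), the symmetry bijection $Sh(S,T)\simeq Sh(T,S)$ from clause~(1) of Proposition~\ref{InductionProperty} gives $|Sh(S,T)|=|Sh(T,S)|$, i.e. $sh(S,T)=sh(T,S)$. For part~(2), clause~(2) says $Sh(S,\eta)$ is a one-point set, so its cardinality is $1$, giving $sh(S,\eta)=1$ (and symmetrically $sh(\eta,T)=1$).

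For part~(3), which is the only clause carrying real content, I would apply the canonical bijection
$$\theta : Sh(S,T) \to \prod_{i=1}^m Sh(S_i,T) + \prod_{j=1}^{n} Sh(S,T_j)$$
of clause~(3). Taking cardinalities on both sides, and using that a finite cartesian product of finite sets has cardinality the product of the cardinalities, while a disjoint sum has cardinality the sum of the cardinalities, yields
$$sh(S,T)= \prod_{i=1}^m sh(S_i,T) + \prod_{j=1}^{n} sh(S,T_j),$$
exactly as asserted. One should note here that all the sets involved are finite, so these cardinality manipulations are unproblematic; finiteness is clear since the edges of any shuffle form a subset of the finite set $E(S)\times E(T)$, as observed before Proposition~\ref{Maximality}.

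Since the statement is a purely formal consequence of the already-established set-level isomorphisms, there is no genuine obstacle to overcome; the only thing to be careful about is that the operations $\prod$ and $+$ on sets translate correctly into multiplication and addition of natural numbers, which is exactly what the parenthetical remark in clause~(3) of Proposition~\ref{InductionProperty} is flagging. This explains why the authors' own proof consists only of invoking the previous proposition.
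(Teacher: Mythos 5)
Your proof is correct and takes essentially the same route as the paper: the paper's entire proof is the remark that the statement follows ``immediately'' from Proposition~\ref{InductionProperty}, i.e.\ by taking cardinalities of the symmetry bijection, the one-point sets, and the bijection $\theta$, exactly as you do. Your added observation that all sets involved are finite (since shuffles embed in $E(S)\times E(T)$) is a harmless extra precaution, not a divergence.
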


\begin{remark}
For the linear  trees $L_p$ and $L_q$ with $p$ and $q$ vertices respectively, let us write $\lambda(p,q)$ for $sh(L_p,L_q)$.
Then the proposition states that $\lambda(p,0)=1=\lambda(0,q)$ and $\lambda(p,q)=\lambda(p-1,q)+\lambda(p,q-1)$, which is the familiar inductive relation defining the binomial coefficient $\binom{p+q}{p}=\binom{p-1+q}{p-1}+\binom{p+q-1}{p}$. It corresponds to the generating function defined by $\Lambda(x,y)=(x+y) \Lambda(x,y)$ for which $\Lambda(x,y)= \sum \lambda(p,q)x^p y^q$.

A similar simple relation already fails for binary trees. For example, let us write $B_p$ for the binary tree all of whose branches have $p$ vertices,
\begin{equation*}
B_1:\vcenter{\xymatrix@R=8pt@C=9pt{
&&\\
&*{ \circ} \ar@{-}[ul]\ar@{-}[ur]&&& \\
&*{} \ar@{-}[u] \\
}}
  \text{   }  
B_2:\vcenter{\xymatrix@R=8pt@C=9pt{
&&&&\\
&*{ \circ} \ar@{-}[ul]\ar@{-}[ur]&&*{ \circ} \ar@{-}[ul]\ar@{-}[ur]&&&\\
&&*{ \circ} \ar@{-}[ul]\ar@{-}[ur]&& \\
&&*{} \ar@{-}[u] \\
}}
  \text{   }   
B_3:\vcenter{\xymatrix@R=8pt@C=9pt{
&&&&&&&&\\
&*{ \circ} \ar@{-}[ul]\ar@{-}[ur]&&*{ \circ} \ar@{-}[ul]\ar@{-}[ur]&&*{ \circ} \ar@{-}[ul]\ar@{-}[ur]&&*{ \circ} \ar@{-}[ul]\ar@{-}[ur]&\\
&&*{ \circ} \ar@{-}[ul]\ar@{-}[ur]&&&&*{ \circ} \ar@{-}[ul]\ar@{-}[ur]&&\\
&&&&*{ \circ} \ar@{-}[ull]\ar@{-}[urr]&& \\
&&&&*{} \ar@{-}[u] \\
}}\end{equation*}

\noindent and $\beta(p,q)$ for $sh(B_p,B_q)$. Then the proposition gives $\beta(p,0)=1=\beta(0,q)$ and for $p,q>0$,
$$\beta(p,q)=\beta(p-1,q)^2 +\beta(p,q-1)^2.$$
But the coefficients of the generating function $F$ defined by $F(x,y)= (x+y)F(x,y)^2$ do not describe these shuffles. Instead, the coefficient of $x^p y^q$ describes the number of binary trees with $p$ white vertices and $q$ black ones, a related but different combinatorial number.
\end{remark}

One can describe upper and lower bounds for the number $sh(S,T)$. 
Let us define the \textit{height} $ht(S)$ of a tree $S$ as the number of vertices on the longest branch.

\begin{prop}
For any two trees $S$ and $T$, the following inequalities hold:
$$\binom{ht(S)+ht(T)}{ht(S)} \leq sh(S,T) \leq \prod_{\alpha,\beta} \binom{|\alpha|-1+|\beta|}{|\beta|} + \prod_{\alpha,\beta} \binom{|\alpha|+|\beta|-1}{|\alpha|}$$
$$ \leq \prod_{\alpha,\beta} \binom{|\alpha|+|\beta|}{|\beta|}$$
where $\alpha$ and $\beta$ range over all branches of $S$ and of $T$, respectively, and $|\alpha|$ denotes the height of a branch.
\end{prop}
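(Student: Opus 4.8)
The plan is to treat the three inequalities by three independent arguments, writing throughout
$$A(S,T)=\prod_{\alpha,\beta}\binom{|\alpha|-1+|\beta|}{|\beta|},\quad B(S,T)=\prod_{\alpha,\beta}\binom{|\alpha|+|\beta|-1}{|\alpha|},\quad C(S,T)=\prod_{\alpha,\beta}\binom{|\alpha|+|\beta|}{|\beta|},$$
where $\alpha$ and $\beta$ range over the branches of $S$ and of $T$; thus the asserted chain reads $\binom{ht(S)+ht(T)}{ht(S)}\le sh(S,T)\le A(S,T)+B(S,T)\le C(S,T)$. Observe that $B(S,T)=A(T,S)$ and that $C$ is symmetric. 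To avoid conventions about empty products I assume $S$ and $T$ each have at least one vertex, the cases involving the unit tree $\eta$ being immediate from $sh(S,\eta)=1$.

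For the lower bound I would choose a branch $\alpha_0$ of $S$ of maximal height $ht(S)$ and a branch $\beta_0$ of $T$ of maximal height $ht(T)$, with leaves $\ell_0$ and $\ell_0'$. Restricting a shuffle of $S$ and $T$ to the branch lying below its unique leaf labelled $(\ell_0,\ell_0')$ defines, by Proposition~\ref{ShuffleOfBranches}, a map $Sh(S,T)\to Sh(\alpha_0,\beta_0)$ landing in the set of classical shuffles of the two linear branches. By Remark~\ref{rem:Extension} every classical shuffle of $\alpha_0$ and $\beta_0$ extends to a shuffle of $S$ and $T$, so this restriction map is surjective; hence $sh(S,T)\ge sh(\alpha_0,\beta_0)$, and the latter equals $\binom{ht(S)+ht(T)}{ht(S)}$ by the classical count for linear trees.

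For the middle inequality I would argue by induction on the total number of vertices, using Proposition~\ref{InductionProperty}. The engine is the identity
$$A(C_m[S_1,\dots,S_m],T)=\prod_{i=1}^m C(S_i,T),$$
which holds because the branches of $C_m[S_1,\dots,S_m]$ are exactly the branches of the $S_i$ with their height raised by one, so each factor $\binom{|\alpha|-1+|\beta|}{|\beta|}$ becomes $\binom{|\alpha'|+|\beta|}{|\beta|}$; by symmetry $B(S,C_n[T_1,\dots,T_n])=\prod_{j=1}^n C(S,T_j)$. Writing $S=C_m[S_1,\dots,S_m]$ and $T=C_n[T_1,\dots,T_n]$, the recursion of Proposition~\ref{InductionProperty}, the inductive hypotheses $sh(S_i,T)\le A(S_i,T)+B(S_i,T)$ and $sh(S,T_j)\le A(S,T_j)+B(S,T_j)$, and the elementary bound $A+B\le C$ applied to the smaller pairs (this is the rightmost inequality, proved below and independent of the induction) combine to give
$$sh(S,T)=\prod_{i=1}^m sh(S_i,T)+\prod_{j=1}^n sh(S,T_j)\le \prod_{i=1}^m C(S_i,T)+\prod_{j=1}^n C(S,T_j)=A(S,T)+B(S,T),$$
as required.

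Finally, for the rightmost inequality I would apply Pascal's rule factorwise: for each pair of branches $\binom{|\alpha|+|\beta|}{|\beta|}=\binom{|\alpha|-1+|\beta|}{|\beta|}+\binom{|\alpha|+|\beta|-1}{|\alpha|}$. Expanding the product $C(S,T)=\prod_{\alpha,\beta}\bigl(\binom{|\alpha|-1+|\beta|}{|\beta|}+\binom{|\alpha|+|\beta|-1}{|\alpha|}\bigr)$ over all choices and discarding every mixed term (all terms being nonnegative) leaves exactly $A(S,T)+B(S,T)$, whence $A(S,T)+B(S,T)\le C(S,T)$. I expect the main obstacle to be the middle inequality: the difficulty lies not in the induction itself but in discovering the auxiliary quantity $C(S,T)$ together with the identity $A(C_m[\dots],T)=\prod_i C(S_i,T)$, which is precisely what makes the recursion of Proposition~\ref{InductionProperty} close up, and in bookkeeping the degenerate contributions of unit subtrees along the way.
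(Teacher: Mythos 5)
Your proof is correct, but for the key middle inequality it takes a genuinely different route from the paper's. Writing, as you do, $A$, $B$, $C$ for the three products: your lower bound is exactly the paper's argument (restriction to a pair of longest branches, surjective by Remark~\ref{rem:Extension}), and your last inequality is the evident Pascal-plus-expansion argument, which the paper does not even spell out. For $sh(S,T)\le A(S,T)+B(S,T)$, however, the paper argues directly rather than by induction: by Proposition~\ref{ShuffleOfBranches} a shuffle is determined by its family of branchwise classical shuffles, which gives $sh(S,T)\le C(S,T)$ at once, and since every shuffle begins either with the root vertex of $S$ or with that of $T$, each branchwise classical shuffle begins accordingly, replacing each factor $\binom{|\alpha|+|\beta|}{|\beta|}$ by $\binom{|\alpha|-1+|\beta|}{|\beta|}$ (resp.\ $\binom{|\alpha|+|\beta|-1}{|\alpha|}$) and yielding the two summands. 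You instead induct on the number of vertices through the recursion of Proposition~\ref{InductionProperty}, closing the induction with the identity $A(C_m[S_1,\dots,S_m],T)=\prod_{i=1}^m C(S_i,T)$ (which is correct: grafting onto a corolla raises every branch height by one) together with $A+B\le C$ applied to the smaller pairs. The paper's argument is shorter and explains what the two summands count, namely shuffles whose bottom vertex comes from $S$, resp.\ from $T$; since that dichotomy is precisely the two summands of the recursion you invoke, your induction is in effect an unwinding of the same observation, at the price of having to discover the auxiliary quantity $C$ and the closing identity, but with the benefit of never needing the injectivity of the branchwise restriction, only the counting recursion. One small point worth recording: your factorwise Pascal identity fails when $|\alpha|=|\beta|=0$ (the left side is $\binom{0}{0}=1$ while the right side is $\binom{-1}{0}+\binom{-1}{0}=2$ under the usual convention), i.e.\ exactly for the pair $(\eta,\eta)$; your standing assumption excludes this pair from the statement, and it never arises inside your induction because in each pair $(S_i,T)$ or $(S,T_j)$ the tree $T$, resp.\ $S$, has at least one vertex, so there is no gap --- but it shows the degenerate bookkeeping you defer is genuinely needed, and that the proposition itself should be read with $S$ and $T$ not both equal to $\eta$.
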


\begin{proof}
If $\alpha$ and $\beta$ are the longest branches of $S$ and $T$, we have observed in Remark~\ref{rem:Extension} that a shuffle of $\alpha$ and $\beta$ can be extended to a shuffle of all of $S$ and $T$. 
This gives the lower bound.
Since an arbitrary shuffle of $S$ and $T$ is determined by the family of shuffles of their branches, we obtain the coarser upper bound. The sharper one simply comes from the observation that any shuffle must start with the root of $S$ or with the root of $T$. 
\end{proof}

\begin{example}\label{ExCranch1}
The lower bound is in general not very sharp. Indeed, in the case of Example~\ref{ExampleWith14}, with both trees of height $2$, we already counted $14$ shuffles, while the binomial coefficient of the lower bound is $6$.
A computer calculation by James Cranch shows that for the trees $S$ and $T$

\begin{equation*}
S:\vcenter{\xymatrix@R=10pt@C=14pt{
&&&&\\
&*{ \circ} \ar@{-}[u]&*{ \circ} \ar@{-}[u]&*{ \circ} \ar@{-}[u]&&\\
&&*{ \circ} \ar@{-}[ul]\ar@{-}[u]\ar@{-}[ur]&& \\
&&*{ \circ } \ar@{-}[u] \\
&&*{} \ar@{-}[u] \\
}}
\ \text { and } \
T:\vcenter{\xymatrix@R=10pt@C=14pt{
&&&&&&\\
&*{ \bullet} \ar@{-}[u]&&&&\\
&*{ \bullet} \ar@{-}[u]&&&&\\
&*{ \bullet} \ar@{-}[u]&&*{ \bullet} \ar@{-}[u]&&\\
&&*{ \bullet} \ar@{-}[ul]\ar@{-}[ur]&& \\
&&*{} \ar@{-}[u] \\
}}
\end{equation*}

\noindent of height $3$ and $4$ respectively, the number of shuffles is 3089. The fact that this is a prime number perhaps shows the lack of symmetry and additivity properties of $sh(S,T)$.
\end{example}

As a final illustration of the growth of the number of $sh(S,T)$, let us consider shuffles of a tree $S$ with linear trees. For such a fixed tree $S$, define an integer function $P_S: \mathbb N \to \mathbb N$ by $P_S(n)=sh(S,L_n)$.
Thus, for example, $P_S(0)=1$ for any tree $S$, while $P_S(n)=1$ if $S$ is the unit tree $\eta$ and $P_S(n)=n+1$ if $S$ has one vertex, i.e. is a corolla. For the proposition, let us introduce some notation. If $S$ is a tree and $v$ is a vertex in $S$, write $S_v=\{s \in S \, ; \, s \geq v\}$ for the maximal subtree of $S$ with $v$ as its root vertex,
$$\xymatrix@R=10pt@C=14pt{
&&&&&\\
&&*{S_v} &&\\
&&*{\bullet} \ar@{-}[uul] \ar@{-}[uur]\ar@{}[r]_{v \quad }\ar@{-}[d] &&\\
&&&&\\
&&*{ \bullet} \ar@{-}[uuuull]\ar@{-}[uuuurr]&& \\
&&*{} \ar@{-}[u] \\
}$$

and write $S!$ for the number 
$$S!=\displaystyle {\prod_{v \in S} |S_v|}$$
where $v$ ranges over all the vertices in $S$, and $|S_v|$ is the number of vertices in $S_v$.
Then for a linear tree $L_n$ one has $L_n!=n!$ and for $S=C_m[S_1, \ldots, S_m]$ one has
$$S! \, = \, |S| \left(\prod_{i=1}^m S_i !\right).$$

\begin{prop}
For any tree $S$ the function $P_S: \mathbb N \to \mathbb N$ is a polynomial of degree $|S|$ with rational coefficients, and leading coefficient 
$(S!)^{-1}$.
\end{prop}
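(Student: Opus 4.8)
The plan is to induct on the number of vertices $|S|$, exploiting the shuffle recursion specialised to linear trees. Writing $L_n = C_1[L_{n-1}]$ for $n \geq 1$ and applying the induction property (Condition (3) of Proposition~\ref{InductionProperty}) with $T = L_n$, one obtains, for $S = C_m[S_1, \ldots, S_m]$, the relation
$$P_S(n) = \prod_{i=1}^m P_{S_i}(n) + P_S(n-1), \qquad n \geq 1,$$
together with the initial condition $P_S(0) = sh(S,\eta) = 1$. Equivalently, the backward difference $P_S(n) - P_S(n-1)$ equals the product $\prod_{i=1}^m P_{S_i}(n)$, so that $P_S$ is recovered from an antidifference plus the value at $0$.

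First I would dispose of the base case $S = \eta$, where $P_\eta(n) = sh(\eta, L_n) = 1$ is a polynomial of degree $0 = |\eta|$ with leading coefficient $1 = (\eta!)^{-1}$ (an empty product). For the inductive step, assume each $P_{S_i}$ is a polynomial of degree $|S_i|$ with leading coefficient $(S_i!)^{-1}$. Then $Q(n) := \prod_{i=1}^m P_{S_i}(n)$ is a polynomial of degree $\sum_{i=1}^m |S_i| = |S| - 1$, the subtraction of $1$ accounting for the root vertex of $S$, and its leading coefficient is $\prod_{i=1}^m (S_i!)^{-1}$. Summing the recursion gives $P_S(n) = 1 + \sum_{k=1}^n Q(k)$. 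Since the partial-sum operator sends a polynomial of degree $d$ to a polynomial of degree $d+1$, using $\sum_{k=1}^n k^d = \tfrac{1}{d+1}n^{d+1} + (\text{lower order})$, the function $P_S$ agrees on all of $\mathbb{N}$ with a polynomial of degree $|S|$ whose leading coefficient is $\tfrac{1}{|S|}\prod_{i=1}^m (S_i!)^{-1}$. The stated identity $S! = |S|\bigl(\prod_{i=1}^m S_i!\bigr)$ then turns this into exactly $(S!)^{-1}$, completing the induction.

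The main point to be careful about is justifying that $P_S$ is a genuine polynomial rather than merely an integer-valued function obeying a recursion; this rests on the standard fact that summation sends polynomials to polynomials and raises the degree by one, and that the antidifference together with the single value $P_S(0) = 1$ determines the function on $\mathbb{N}$ uniquely (by induction on $n$). Once polynomiality is secured, the degree and leading-coefficient bookkeeping is routine, and the multiplicative identity for $S!$ makes the leading coefficient collapse cleanly to $(S!)^{-1}$; no genuinely hard estimate is involved.
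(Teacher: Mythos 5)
Your proof is correct and follows essentially the same route as the paper's: both induct on the decomposition $S = C_m[S_1, \ldots, S_m]$, unroll the recursion $P_S(n) = \prod_i P_{S_i}(n) + P_S(n-1)$ into a sum, and use the power-sum (Faulhaber) fact that $\sum_{k=0}^n k^d$ has degree $d+1$ and leading coefficient $(d+1)^{-1}$, concluding via $S! = |S|\prod_i S_i!$. The only cosmetic difference is that you anchor the induction at the unit tree $\eta$ rather than at a corolla, which if anything makes the argument slightly more uniform.
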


\begin{proof}
The proposition is obviously true for a corolla $S$, as already pointed out.
For a larger tree $S=C_m[S_1, \ldots, S_m]$, we can use Proposition~\ref{InductionProperty}(3) to write
$$P_S(n)= \prod_{i=1}^m P_{S_i}(n) + P_S(n-1)$$
$$=\prod_{i=1}^m P_{S_i}(n) + \prod_{i=1}^m P_{S_i}(n-1) + \ldots + \prod_{i=1}^m P_{S_i}(0).$$
If we write the leading term of $P_S(n)$ as $c_S n^{d_S}$, and that of $P_{S_i}(n)$ as $c_i n^{d_i}$, 
then $c_S n^{d_S}$ is also the leading term of 
$$\prod_{i=1}^m c_i n^{d_i} + \prod_{i=1}^m c_i (n-1)^{d_i} + \ldots + \prod_{i=1}^m c_i 0^{d_i}$$
$$= \left(\prod_{i=1}^m c_i \right) \left(\sum_{j=0}^n j^d\right)$$
where $d=d_1+ \ldots + d_m$. But by Faulhaber's formula \cite{Knuth},
$\sum_{j=0}^n j^d$ is a polynomial  in $n$ with rational coefficients, of degree $d+1$ and with leading coefficient $(d+1)^{-1}$. So we find that 
$$c_S= (\prod_{i=1}^m c_i ) (d+1)^{-1} \ \ \text{ and } \ \ d_S=d+1. $$
But if $d_i=|S_i|$ then $d+1=|S|$, and if $c_i=(S_i!)^{-1}$ then $c_S=(S!)^{-1}$. So the proposition is proved.
\end{proof}

\begin{example}\label{ExCranch2}
The polynomial $P_S$ does not determine $S$, as the following example, pointed out to us by James Cranch, shows.
Consider the following two trees $S$ and $R$,

\begin{equation*}
S:\vcenter{\xymatrix@R=10pt@C=14pt{
&&&&\\
&*{ \circ} \ar@{-}[u]&*{ \circ} \ar@{-}[u]&*{ \circ} \ar@{-}[u]&&\\
&&*{ \circ} \ar@{-}[ul]\ar@{-}[u]\ar@{-}[ur]&& \\
&&*{ \circ } \ar@{-}[u] \\
&&*{} \ar@{-}[u] \\
}}
\ \text { and } \
R:\vcenter{\xymatrix@R=10pt@C=14pt{
&&&&&&\\
&*{ \bullet} \ar@{-}[u]&&*{ \bullet} \ar@{-}[u]&&\\
&*{ \bullet} \ar@{-}[u]&&*{ \bullet} \ar@{-}[u]&&\\
&&*{ \bullet} \ar@{-}[ul]\ar@{-}[ur]&& \\
&&*{} \ar@{-}[u] \\
}}
\end{equation*}

\noindent
and let us calculate $P_S(n)$ and $P_T(n)$. A shuffle of $S$ and the linear tree $L_n$ is described by putting $i$ vertices of $L_n$ below the root of $S$, then $j$ vertices  between the bottom two vertices, so that on each of the three branches, one is left with a shuffle of the top $n-i-j$ vertices of $L_n$ and the one top vertex of $S$ on that branch.
There are $n-i-j+1$ of the latter shuffles on each branch, so
$$P_S(n)= \sum_{i=0}^n \sum_{j=0}^{n-i} (n-i-j+1)^3 
=\sum_{k=0}^n \sum_{j=0}^{k} (k-j+1)^3
=\sum_{k=0}^n \sum_{l=1}^{k+1} l^3
=\sum_{k=0}^n \binom{k+2}{2}^2$$
where we have used the familiar formula for the sum of cubes.

On the other hand, a shuffle of $R$ and $L_n$ is described by putting $i$ vertices of $L_n$ below the root of $R$, and then shuffling each of the two linear branches of $R$ with the remaining $(n-i)$ vertices of $L_n$. So
$$P_R(n)= \sum_{i=0}^n \binom{n-i+2}{2}^2 =\sum_{k=0}^n \binom{k+2}{2}^2$$
also, showing $P_R=P_S$.
\end{example}

%%%%%%%%%%%%%%%%%%%%%%%%%%%%%%%%%%%%%%%%%%%%%%%%%%%%%%%%%%%%%%%%%%%%%%%%%%
% 
%%%%%%%%%%%%%%%%%%%%%%%%%%%%%%%%%%%%%%%%%%%%%%%%%%%%%%%%%%%%%%%%%%%%%%%%%%

\section{The lattice of shuffles}\label{S:Lattice}

%\subsection{}

The inductive description of the set of shuffles of two trees $S$ and $T$ given at the end of Section~\ref{S:Number},
$$Sh(S,T)= \prod_{i=1}^m Sh(S_i,T) + \prod_{j=1}^{n} Sh(S,T_j),$$
defines a partial order on the set of shuffles, by declaring elements in the left summand to be smaller than those in the right and next proceeding by induction. We would like to describe a more picturesque way of introducing this partial order. Let us draw the vertices of $S$ as white and those of $T$ as black. Then the first (smallest) element in $Sh(S,T)$ is the one with a copy of the black tree $T$ on top of each of the leaves of $S$. The elements in the partial order immediately above that are obtained by ``percolating'' the copies of the black root vertex of $T$ just above a single top white vertex of $S$ through that white vertex. If the root of $T$ has $n$ incoming edges and that particular top vertex of $S$ has $m$ incoming edges then this involves replacing $m$ copies of the root of $T$ by just one, and replacing the top vertex involved of $S$ by $n$ copies of it.
Here is a picture for $n=2$ and $m=3$:

\includegraphics[scale=0.8]{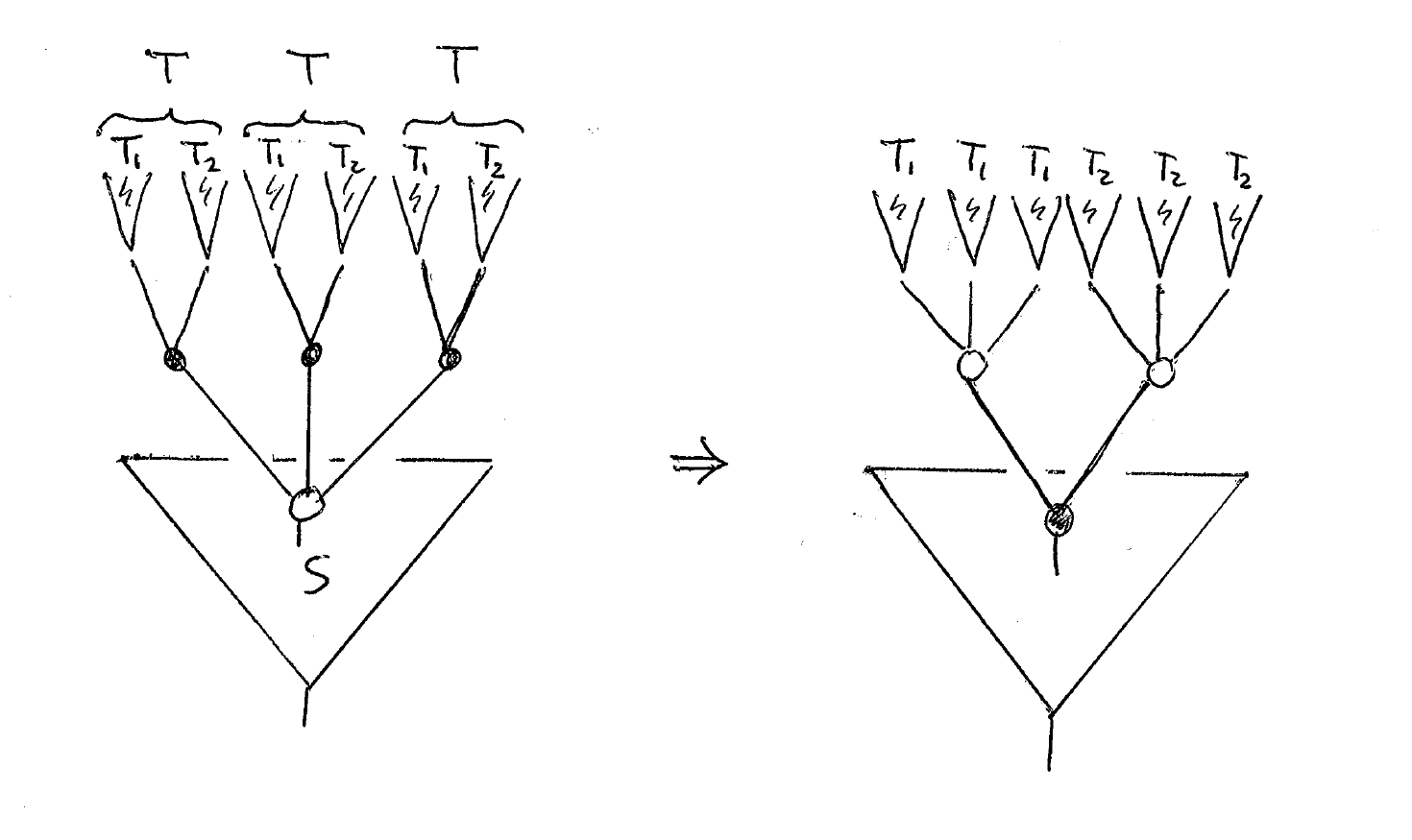}

If, more generally, we change a shuffle by locally replacing a white vertex with $m$ incoming edges and $m$ copies of the same $n$-ary black vertex on top, by percolating each black vertex through the white one to obtain one single black vertex with $n$ copies of the white one on top, we obtain a new shuffle of which we will say that it is obtained from the first one by a \textit{single percolation step}. Below is a picture illustrating such a single percolation step. Notice that the figures below have the same ``boundary'', i.e. the same root and the same set of local leaves, be it in a different planar order. Therefore such a percolation step can indeed be performed locally, leaving the rest of the shuffle untouched. 

\begin{equation*}
\vcenter{\xymatrix@R=10pt@C=14pt{
&&&&&&&\\
&*{ \bullet}\ar@{}[u]|{\cdots} \ar@{-}[ul]^{(s_1,t_1)}\ar@{-}[ur]_{(s_1,t_n)}&&&&*{ \bullet}\ar@{}[u]|{\cdots} \ar@{-}[ul]^{(s_m,t_1)}\ar@{-}[ur]_{(s_m,t_n)}&&\\
&&&*{ \circ}\ar@{}[u]|{\cdots\cdots} \ar@{-}[ull]^{(s_1,t)}\ar@{-}[urr]_{(s_m,t)}&& \\
&&&*{} \ar@{-}[u]_{(s,t)} \\
}}
\quad \Longrightarrow \quad
\vcenter{\xymatrix@R=10pt@C=14pt{
&&&&&&&\\
&*{ \circ}\ar@{}[u]|{\cdots} \ar@{-}[ul]^{(s_1,t_1)}\ar@{-}[ur]_{(s_m,t_1)}&&&&*{ \circ}\ar@{}[u]|{\cdots} \ar@{-}[ul]^{(s_1,t_n)}\ar@{-}[ur]_{(s_m,t_n)}&&\\
&&&*{ \bullet}\ar@{}[u]|{\cdots\cdots} \ar@{-}[ull]^{(s,t_1)}\ar@{-}[urr]_{(s,t_n)}&& \\
&&&*{} \ar@{-}[u]_{(s,t)} \\
}}
\end{equation*}  
Above is a picture of a percolation step locally, for a part of $S$ and a part of $T$ drawn below.
\begin{equation*}
\vcenter{\xymatrix@R=10pt@C=14pt{
&&\\
&*{ \circ}\ar@{}[u]|{\cdots} \ar@{-}[ul]^{s_1}\ar@{-}[ur]_{s_m}&& \\
&*{} \ar@{-}[u]_{s} \\
}}
 \ \  \ \ 
\vcenter{\xymatrix@R=10pt@C=14pt{
&&\\
&*{ \bullet}\ar@{}[u]|{\cdots} \ar@{-}[ul]^{t_1}\ar@{-}[ur]_{t_n}&& \\
&*{} \ar@{-}[u]_{t} \\
}}\end{equation*}

With this definition, a shuffle $A$ comes before a shuffle $B$ in the partial order on $Sh(S,T)$ precisely when $B$ can be obtained from $A$ by a sequence of percolation steps.
If we start with the minimal shuffle having copies of the black tree $T$ on top of a single white tree $S$, and perform all possible sequences of percolation steps, then we obtain all the possible shuffles of $S$ and $T$, ending with the maximal one having copies of $S$ on top of the tree $T$.

\bigskip

For a tree $T$, recall that we write $V(T)$ for the partial order of vertices of $T$, taking the root vertex as smallest element. 
From now on, $r_T$ will now denote the root vertex rather than the root edge.
We will also have occasion to use the opposite partial order, denoted $V(T)^\vee$.
Any partial order $P$ defines a topology on its set of elements, the so-called \textit{Alexandrov topology}, the open sets of which are the upwards closed sets. Explicitly, $U \subset P$ is open if and only if
$$( p \leq q \text{ and } p \in U ) \Longrightarrow q \in U$$
for any $p,q$ in $P$. Every such open set is the union of basic open sets of the form 
$$B(p)=\{q \in P \, | \, p \leq q \}.$$
Let us write $\mathcal O(P)$ for the lattice of open subsets of $P$. With this notation, we can state the following result.

\begin{prop}
Let $S$ and $T$ be two trees. There is a natural isomorphism of partially ordered sets 
$$\phi: Sh(S,T) \to \mathcal O (V(S) \times V(T)^\vee).$$
\end{prop}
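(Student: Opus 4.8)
The plan is to set up the bijection $\phi$ by sending a shuffle $A \in Sh(S,T)$ to the set of those black vertices appearing in $A$, recorded as pairs in $V(S) \times V(T)$. More precisely, given a shuffle $A$, each vertex of $A$ is either white (a copy of a vertex $s \in V(S)$) or black (a copy of a vertex $t \in V(T)$), and each such copy carries a well-defined position in the product grid. I would define $\phi(A)$ to be the set of pairs $(s,t) \in V(S) \times V(T)$ such that the black vertex $t$ has ``already percolated past'' the white vertex $s$, i.e. such that the copy of $t$ sits below the copy of $s$ in the relevant branch of $A$. The percolation picture already suggests this: reading off which black/white orderings have been locally swapped relative to the minimal shuffle encodes exactly an up-set, and the minimal shuffle (black tree on top of every leaf of $S$) corresponds to the empty open set while the maximal shuffle (copies of $S$ on top of $T$) corresponds to all of $V(S) \times V(T)^\vee$.

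The cleanest route is by induction, using the inductive presentation of $Sh(S,T)$ from Proposition~\ref{InductionProperty} together with a matching inductive description of $\mathcal O(V(S) \times V(T)^\vee)$. First I would check the base cases: when $S = \eta$ or $T = \eta$, both $Sh$ and $\mathcal O(\emptyset \times \cdots)$ are singletons, matching the unit clause. For the inductive step, write $S = C_m[S_1,\dots,S_m]$ and $T = C_n[T_1,\dots,T_n]$, so that $V(S)$ has root vertex $r_S$ with the $V(S_i)$ above it and similarly for $V(T)$. The key combinatorial observation is that an open set $U$ in $V(S) \times V(T)^\vee$ is determined by whether it contains the minimal element $(r_S, r_T)$ of $V(S) \times V(T)^\vee$ (note $r_T$ is maximal in $V(T)^\vee$, so the minimal element of the product is $(r_S, r_T)$) — and more usefully, by decomposing according to the top layer. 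I would show that $\mathcal O(V(S) \times V(T)^\vee)$ splits as a disjoint union indexed by the same two-summand structure as $\theta$: the open sets not containing the relevant ``percolate the root of $T$ first'' generator correspond to the product $\prod_i \mathcal O(V(S_i) \times V(T)^\vee)$, and those containing it correspond to $\prod_j \mathcal O(V(S) \times V(T_j)^\vee)$. Matching this with $\theta$ and invoking the uniqueness Remark after Proposition~\ref{InductionProperty} gives the bijection, and one then verifies it is order-preserving by checking that a single percolation step corresponds to enlarging the open set by one basic generator $B(s,t)$.

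The main obstacle I anticipate is making the inductive decomposition of $\mathcal O(V(S) \times V(T)^\vee)$ precise and showing it is compatible with $\theta$, rather than merely bijective. The subtlety is that the Alexandrov open sets do not decompose as a naive product over branches, because the grid $V(S) \times V(T)^\vee$ is not a disjoint union of its columns; the root layers $\{r_S\} \times V(T)^\vee$ and $V(S) \times \{r_T\}$ interact, and I must track exactly how the up-closure condition constrains membership of the bottom row and column. Concretely, the cleanest formulation is: an up-set $U$ either does not contain any element of the column $\{r_S\} \times (V(T)^\vee \setminus \{r_T\})$, in which case up-closure forces $U$ to live entirely in $\bigcup_i V(S_i) \times V(T)^\vee$ and to restrict to an arbitrary up-set on each; or it does, in which case $(r_S, r_T) \in U$ and one analyzes the $T_j$-pieces symmetrically. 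Verifying that these two alternatives are exhaustive and mutually exclusive, and that they reproduce the left/right summand distinction (with the correct order direction, since $V(T)^\vee$ is opposite) is the delicate bookkeeping; once it is in place, the order-isomorphism claim follows from the percolation description, since adding a single generator $B(s,t)$ to an open set is precisely the lattice-theoretic shadow of a single percolation step.
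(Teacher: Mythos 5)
Your overall strategy is exactly the paper's: define $\phi$ by induction, using the decomposition $\theta$ of Proposition~\ref{InductionProperty} on the shuffle side and a matching decomposition $\tau$ of $\mathcal O(V(S)\times V(T)^\vee)$ on the open-set side, with singleton base cases and the percolation picture as intuition. But your precise formulation of $\tau$ --- the step you yourself identify as the crux --- is wrong on two counts. First, $(r_S,r_T)$ is \emph{not} the minimal element of $V(S)\times V(T)^\vee$: it is minimal in the first coordinate but \emph{maximal} in the second, so the elements $(r_S,t)$ with $t\neq r_T$ lie strictly below it, and the minimal elements of the product are the pairs $(r_S,t)$ with $t$ a top vertex of $T$. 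Second, and consequently, your ``cleanest formulation'' of the dichotomy fails: you split according to whether $U$ meets the column $\{r_S\}\times(V(T)^\vee\setminus\{r_T\})$ and claim that if it does not, then $U\subseteq\bigcup_i V(S_i)\times V(T)^\vee$. A counterexample is $U=V(S)\times\{r_T\}$, which is an up-set (since $r_T$ is maximal in $V(T)^\vee$), meets that column in no element, yet contains $(r_S,r_T)$ and so does not live in $\bigcup_i V(S_i)\times V(T)^\vee$. The same happens for any $U=V(S)\times\{r_T\}\cup U_1\cup\dots\cup U_n$ in which each open $U_j\subseteq V(S)\times V(T_j)^\vee$ avoids $\{r_S\}\times V(T_j)^\vee$. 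All such $U$ belong in the \emph{right}-hand summand (they are the shuffles in which the black root has percolated through the white root vertex), but your criterion assigns them to the left-hand case, where your claimed conclusion breaks down; so your two alternatives do not reproduce the two-summand structure of $\theta$, and the induction does not close as written.

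The fix is simple, and it is in fact what your earlier, vaguer sentence (dichotomy on the ``percolate the root of $T$ first'' generator) was pointing at: split according to whether $(r_S,r_T)\in U$. If $(r_S,r_T)\notin U$, then since \emph{every} element of $\{r_S\}\times V(T)^\vee$ lies below $(r_S,r_T)$, upward closure forces $U\cap(\{r_S\}\times V(T)^\vee)=\emptyset$, hence $U\subseteq\coprod_i V(S_i)\times V(T)^\vee$ and $U$ is determined by an arbitrary tuple of open sets in the $V(S_i)\times V(T)^\vee$. If $(r_S,r_T)\in U$, then upward closure forces $V(S)\times\{r_T\}\subseteq U$, and $U=V(S)\times\{r_T\}\cup U_1\cup\dots\cup U_n$ with the $U_j\subseteq V(S)\times V(T_j)^\vee$ arbitrary disjoint open sets. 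This is the isomorphism $\tau$ of the paper, matching $\theta$ summand by summand; with it in place, your inductive definition of $\phi$ (and your percolation argument for order-compatibility, which is fine in spirit) goes through.
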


\begin{remark}
An intuitive way to describe this isomorphism is as follows: Thinking of a shuffle $A$ of $S$ and $T$ as obtained by percolation steps from the minimal shuffle (copies of the black tree $T$ on top of the white one $S$), the corresponding open set consists of exactly the pairs of vertices $(v,w) \in V(S) \times V(T)$ for which $w$ has percolated through $v$ in $A$.
For example, in Example~\ref{ExampleWith14}, the first tree consists in copies of $T$ above $S$, and the corresponding open set of percolated vertices is empty. 
To obtain the second tree, the root vertex of $T$ has percolated through the top vertex of $S$, and thus the corresponding open set is just the singleton consisting of the pair with the top vertex of $S$ and the root vertex of $T$.
\end{remark}

\begin{proof}
Note that if $S$ or $T$ is the unit tree $\eta$ with just a single edge, then $Sh(S,T)$ is a single point, as is the lattice of open subsets of the empty partial order, so the proposition is still valid.

We first show that $\mathcal O (V(S) \times V(T)^\vee)$ allows a similar inductive description as $Sh(S,T)$. To this end, write
$S=C_m[S_1, \ldots, S_m]$ and $T=C_n[T_1, \ldots, T_n]$, as before. Consider an open set $U \subset V(S) \times V(T)^\vee$. 
If the pair $(r_S, r_T)$ of root vertices does not belong to $U$, then no $(r_S,t)$ can belong to $U$, so $U$ is in fact a subset of
$\coprod V(S_i) \times V(T)^\vee$. On the other hand, if $(r_S, r_T)$ does belong to $U$, then $S \times \{r_T\} \subset U$, so $U$ is of the form
$S \times \{r_T\} \cup U_1 \cup \ldots \cup U_n$ where $U_j \subset V(S) \times V(T_j)^\vee$ are disjoint open sets.
These $U_j$'s together with the fact that  $(r_S, r_T) \in U$  determine $U$. These observations define an isomorphism:
$$\tau : \mathcal O (V(S) \times V(T)^\vee) \, \to \, \prod \mathcal O (V(S_i) \times V(T)^\vee) + \prod \mathcal O (V(S) \times V(T_j)^\vee).$$
We can now simply define the isomorphism $\phi=\phi_{S,T}$ of the proposition by induction, in a way that respects the two ``induction isomorphisms'' $\theta$ and $\tau$, as in the diagram below.

\begin{equation*}
\xymatrix@R=30pt@C=10pt@M=0pt{
Sh(S,T) \ar[d]_{\varphi_{S,T}}  & \ar[rrrrr]_\theta^{\sim}&&&&&&
\prod_{i=1}^m Sh(S_i,T) + \prod_{j=1}^{n} Sh(S,T_j) \ar[d]^{\prod\varphi_{S_i,T} + \prod\varphi_{S,T_j}}\\
\mathcal O (V(S) \times V(T)^\vee)& \ar[rrrrr]_\tau^{\sim}&&&&&&
  \prod \mathcal O (V(S_i) \times V(T)^\vee) + \prod \mathcal O (V(S) \times V(T_j)^\vee)
}
\end{equation*}

\end{proof}

\begin{remark}
Since open sets in a poset correspond to antichains of elements in that poset, the problem of counting shuffles is a special case of that of counting such antichains, a problem known to be notoriously difficult.
\end{remark}

\begin{cor}
The partial order $Sh(S,T)$ is a finite distributive lattice.
\end{cor}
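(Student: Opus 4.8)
The plan is to invoke the preceding proposition, which gives a natural isomorphism of partially ordered sets
$$\phi: Sh(S,T) \to \mathcal O (V(S) \times V(T)^\vee),$$
and then simply observe that the codomain is a finite distributive lattice, so the same must hold for $Sh(S,T)$. Since being a finite distributive lattice is a property that transports along any isomorphism of posets, the corollary reduces to the single assertion that for any finite partial order $P$, the poset $\mathcal O(P)$ of open sets in the Alexandrov topology (ordered by inclusion) is a finite distributive lattice. Here I would take $P = V(S) \times V(T)^\vee$, which is visibly finite.

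First I would recall that the open sets of the Alexandrov topology on $P$ are exactly the upwards-closed subsets of $P$, as defined in the excerpt. The key point is that the collection of upwards-closed subsets of any poset is closed under arbitrary unions and arbitrary intersections: if each $U_k$ is upwards closed and $p \le q$ with $p$ in $\bigcup_k U_k$ (respectively $\bigcap_k U_k$), then $q$ lies in the same union (respectively intersection), since $q$ lies above $p$ in each relevant $U_k$. Hence $\mathcal O(P)$ is a lattice in which the meet is given by intersection and the join by union, and these are the meet and join inherited from the ambient Boolean lattice of all subsets of $P$. Since $P$ is finite, $\mathcal O(P)$ is finite.

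Next I would verify distributivity. Because joins and meets in $\mathcal O(P)$ are literally set-theoretic union and intersection, the distributive law
$$U \cap (V \cup W) = (U \cap V) \cup (U \cap W)$$
holds in $\mathcal O(P)$ for the simple reason that it already holds for arbitrary subsets of the underlying set $P$; one only needs that the three operations landing inside $\mathcal O(P)$ agree with the set-theoretic ones, which is exactly what the previous paragraph established. Thus $\mathcal O(P)$ is a distributive lattice, and combining with finiteness and the transport of structure along $\phi$, the partial order $Sh(S,T)$ is a finite distributive lattice.

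I do not expect any genuine obstacle here: all the real work has already been done in constructing the isomorphism $\phi$, and the remaining content is the standard and elementary fact that the open sets of a finite poset under the Alexandrov topology form a distributive lattice under union and intersection. The only point requiring any care is to make explicit that the lattice operations on $\mathcal O(P)$ coincide with union and intersection, rather than with some other meet and join, since it is precisely this coincidence that makes distributivity immediate; but this is routine and follows directly from the closure properties noted above.
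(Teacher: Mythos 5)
Your proof is correct and matches the paper's (implicit) argument: the corollary is stated there as an immediate consequence of the proposition identifying $Sh(S,T)$ with $\mathcal O(V(S) \times V(T)^\vee)$, exactly as you do. Your explicit verification that upward-closed subsets of a finite poset form a finite distributive lattice under union and intersection is the standard fact the paper leaves unstated, and it is carried out correctly.
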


Recall the notion of a reduced tree from Section~\ref{S:Def}.
Any tree $T$ has a unique reduction $T_{red}$, obtained by pruning away all the leaves, except for leaving exactly one leaf at each top vertex.

\begin{equation*}
T:\vcenter{\xymatrix@R=8pt@C=10pt{
&&&&&\\
&*{ \bullet} \ar@{-}[ul]\ar@{-}[u]\ar@{-}[ur]\ar@{-}[urr]&&&\\
&*{ \bullet} \ar@{-}[ul]\ar@{-}[u]\ar@{-}[ur]&&*{ \bullet} \ar@{-}[ul]\ar@{-}[ur]&& \\
&&*{ \bullet} \ar@{-}[ul]\ar@{-}[u]\ar@{-}[ur]&& \\
&&*{} \ar@{-}[u] \\
}}
\ \text { and } \
T_{red}:\vcenter{\xymatrix@R=7pt@C=8pt{
&&&&&&\\
&*{ \bullet} \ar@{-}[u]&&\\
&*{ \bullet} \ar@{-}[u]&&*{ \bullet} \ar@{-}[u]&&\\
&&*{ \bullet} \ar@{-}[ul]\ar@{-}[ur]&& \\
&&*{} \ar@{-}[u] \\
}}
\end{equation*}

This reduced tree $T_{red}$ is the minimal tree for which  $V(T_{red}) \simeq V(T)$. The previous proposition clearly implies that for the study of shuffles, one can restrict oneself to reduced trees. We state this explicitly as:

\begin{cor}
For any two trees $S$ and $T$ there is a canonical isomorphism of partially ordered sets:
$$Sh(S,T) = Sh(S_{red},T_{red}).$$
\end{cor}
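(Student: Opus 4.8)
The plan is to deduce this corollary directly from the preceding proposition, which identifies $Sh(S,T)$ with the lattice $\mathcal{O}(V(S) \times V(T)^\vee)$ of open sets in the Alexandrov topology. The key observation is that this identification depends on $S$ and $T$ \emph{only} through the partial orders $V(S)$ and $V(T)$ of their vertices. Hence it suffices to show that passing to the reduced tree leaves these vertex posets unchanged.

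First I would record the fact, built into the definition of reduction, that the vertex poset is invariant: there is a canonical isomorphism $V(T) \simeq V(T_{red})$ of partial orders, and likewise $V(S) \simeq V(S_{red})$. Indeed, $T_{red}$ was defined precisely as the minimal tree with $V(T_{red}) \simeq V(T)$, and the pruning map $T \to T_{red}$ is the identity on vertices, so this isomorphism is canonical rather than merely existential. Dualizing gives $V(T)^\vee \simeq V(T_{red})^\vee$.

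Next, taking the product of these poset isomorphisms yields an isomorphism of partial orders
$$V(S) \times V(T)^\vee \;\simeq\; V(S_{red}) \times V(T_{red})^\vee,$$
and any order isomorphism induces an isomorphism of the associated lattices of Alexandrov-open sets, since an open set is just an upward-closed set, a condition preserved by order isomorphisms. Therefore
$$\mathcal{O}(V(S) \times V(T)^\vee) \;\simeq\; \mathcal{O}(V(S_{red}) \times V(T_{red})^\vee).$$

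Finally I would assemble the chain
$$Sh(S,T) \xrightarrow{\phi_{S,T}} \mathcal{O}(V(S) \times V(T)^\vee) \simeq \mathcal{O}(V(S_{red}) \times V(T_{red})^\vee) \xrightarrow{\phi_{S_{red},T_{red}}^{-1}} Sh(S_{red},T_{red}),$$
each arrow being an isomorphism of partially ordered sets by the preceding proposition and the step above. The composite is the desired canonical isomorphism. There is essentially no hard step here; the only point meriting a short verification is the \emph{canonicity} (as opposed to mere existence) of $V(T) \simeq V(T_{red})$, which I expect to be the main thing to check carefully, and which follows because the reduction collapses only leaves and so acts as the identity on the set of vertices and their order relations.
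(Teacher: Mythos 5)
Your proof is correct and follows exactly the route the paper intends: the paper states this corollary as an immediate consequence of the proposition $Sh(S,T) \simeq \mathcal{O}(V(S) \times V(T)^\vee)$ together with the defining property $V(T_{red}) \simeq V(T)$, which is precisely the chain of isomorphisms you spell out. You have simply made explicit the details the paper leaves to the reader, including the (correct) observation that canonicity comes from reduction acting as the identity on vertex posets.
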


Before stating the next corollary, let us introduce some more notation. If $P$ is a partial order, each open set $U \subset P$ has a characteristic function $\chi_U : P \to L_2$ where $L_2$ is the linear order with two vertices $v$ and $w$
\begin{equation*}
L_2: \vcenter{\xymatrix@R=7pt@C=8pt{
\\
*{\ \ \circ \, v} \ar@{-}[u] \\
*{\ \ \, \circ \, w} \ar@{-}[u] \\
*{} \ar@{-}[u] \\
}}
\end{equation*}  

\noindent
and $\chi_U(p)=v$ if and only if $p \in U$. In this way, we obtain an isomorphism of partial orders $\mathcal O(P) \to Hom(P,L_2)$. 
We also recall the familiar natural order isomorphism for three partial orders $P$, $Q$ and $R$,
$$Hom (P \times Q, R) \to Hom(P, Hom(Q,R)).$$
Finally, we note that $\mathcal O (P)$ is the free complete semilattice generated by the basic open sets $B(P)$, which form a partial order isomorphic to $P^\vee$.
More precisely, if $L$ is a semilattice with all suprema and $f: P^\vee \to L$ is a map of partially ordered sets, then $f$ extends uniquely to a map $\bar f: \mathcal O(P) \to L$ 
of semilattices, i.e. preserving all suprema. Using these generalities, we observe the following:

\begin{cor}
 There is a natural order isomorphism
$$Sh(S,T) \to Hom_\vee (\mathcal O (V(T)), \mathcal O(V(S)))$$
between the shuffles and the maps of sup-semilattices.
\end{cor}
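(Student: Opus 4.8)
The plan is to chain together the three order isomorphisms that the paper has just assembled. By the proposition proved above, there is a natural order isomorphism $Sh(S,T) \cong \mathcal{O}(V(S) \times V(T)^\vee)$, so it suffices to establish a natural order isomorphism
$$\mathcal{O}(V(S) \times V(T)^\vee) \cong Hom_\vee(\mathcal{O}(V(T)), \mathcal{O}(V(S))).$$
First I would set $P = V(S)$ and $Q = V(T)$ to reduce clutter, and unwind the right-hand side using the three ``generalities'' recalled just before the corollary. The characteristic-function isomorphism gives $\mathcal{O}(P') \cong Hom(P', L_2)$ for any poset $P'$; applied to $P' = V(S) \times V(T)^\vee = P \times Q^\vee$ this yields $\mathcal{O}(P \times Q^\vee) \cong Hom(P \times Q^\vee, L_2)$.

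Next I would apply the Hom--tensor adjunction $Hom(P \times Q^\vee, L_2) \cong Hom(P, Hom(Q^\vee, L_2))$, again recalled in the excerpt. The inner Hom satisfies $Hom(Q^\vee, L_2) \cong \mathcal{O}(Q^\vee)$ by the characteristic-function isomorphism. Now comes the step that must be handled with care: I want to identify $Hom(P, \mathcal{O}(Q^\vee))$ with $Hom_\vee(\mathcal{O}(Q), \mathcal{O}(P))$. Here I would invoke the universal property stated in the excerpt, namely that $\mathcal{O}(Q)$ is the free sup-semilattice on the basic opens $B(q)$, which form a poset isomorphic to $Q^\vee$. Thus a sup-preserving map $\mathcal{O}(Q) \to L$ into any complete sup-semilattice $L$ is the same as an order map $Q^\vee \to L$; taking $L = \mathcal{O}(P)$ gives $Hom_\vee(\mathcal{O}(Q), \mathcal{O}(P)) \cong Hom(Q^\vee, \mathcal{O}(P))$. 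It remains to match this with $Hom(P, \mathcal{O}(Q^\vee))$, which I would do by observing $\mathcal{O}(Q^\vee) \cong Hom(Q^\vee, L_2)$ and $\mathcal{O}(P) \cong Hom(P, L_2)$ and applying the adjunction a second time (or, more symmetrically, by noting both sides are canonically isomorphic to $Hom(P \times Q^\vee, L_2)$, since $Hom(A, Hom(B, L_2)) \cong Hom(A \times B, L_2) \cong Hom(B, Hom(A, L_2))$ with $A = P$, $B = Q^\vee$).

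Assembling the chain, I obtain
$$Sh(S,T) \cong \mathcal{O}(P \times Q^\vee) \cong Hom(P \times Q^\vee, L_2) \cong Hom(Q^\vee, Hom(P, L_2)) \cong Hom_\vee(\mathcal{O}(Q), \mathcal{O}(P)),$$
which is the desired statement with $Q = V(T)$ and $P = V(S)$. I would then verify that each isomorphism in the chain is an isomorphism of partial orders (pointwise order on Hom-sets, inclusion order on $\mathcal{O}$), which is routine since the characteristic-function and adjunction bijections are manifestly monotone in both directions.

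The main obstacle, I expect, is the bookkeeping around the two opposite orders $Q^\vee$ versus $Q$ and making sure the free-sup-semilattice universal property is applied on the correct side. The free sup-semilattice $\mathcal{O}(Q)$ is generated by basic opens indexed by $Q^\vee$, so the adjunction $Hom_\vee(\mathcal{O}(Q), L) \cong Hom(Q^\vee, L)$ naturally produces a $Q^\vee$ rather than a $Q$; reconciling this with the $V(T)^\vee$ appearing in the intermediate proposition is exactly what makes the two $\vee$'s cancel correctly, and it is the point where a sign-like error would most easily creep in. Everything else is a formal consequence of the adjunction and the characteristic-function description of open sets.
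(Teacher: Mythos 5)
Your proposal is correct and follows essentially the same route as the paper: the final chain you assemble, $Sh(S,T) \cong \mathcal O(V(S)\times V(T)^\vee) \cong Hom(V(S)\times V(T)^\vee, L_2) \cong Hom(V(T)^\vee, Hom(V(S),L_2)) \cong Hom_\vee(\mathcal O(V(T)), \mathcal O(V(S)))$, is exactly the paper's sequence of rewritings, using the same three ingredients (characteristic functions, the product--Hom adjunction, and the freeness of $\mathcal O(V(T))$ on the basic opens indexed by $V(T)^\vee$). The only difference is expository: you first curry on $V(S)$ and then observe the symmetric currying gives the same thing, whereas the paper curries on $V(T)^\vee$ directly.
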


\begin{proof}
We just rewrite the isomorphism of the proposition, as follows.
\begin{eqnarray*}
 Sh(S,T)&=&\mathcal O (V(S) \times V(T)^\vee )\\
&=& Hom (V(S) \times V(T)^\vee, L_2)\\
&=& Hom ( V(T)^\vee, Hom(V(S),L_2))\\
&=& Hom ( V(T)^\vee, \mathcal O (V(S)))\\
&=& Hom_\vee ( \mathcal O(V(T)), \mathcal O (V(S))
\end{eqnarray*}
\end{proof}

\begin{cor}
For any three trees $R$, $S$ and $T$, there is an asssociative composition
$$Sh(S,T) \times Sh (R,S) \to Sh(R,T)$$
giving the trees and shuffles between them the structure of a category
(more precisely, a category enriched in distributive lattices).
\end{cor}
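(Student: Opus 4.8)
The plan is to exploit the previous corollary, which identifies $Sh(S,T)$ with $Hom_\vee(\mathcal O(V(T)), \mathcal O(V(S)))$, the sup-semilattice maps between the open-set lattices. Once shuffles are reinterpreted as such morphisms, the desired composition becomes nothing more than ordinary composition of maps of sup-semilattices. Explicitly, given $\sigma \in Sh(S,T)$ corresponding to a map $\bar\sigma : \mathcal O(V(T)) \to \mathcal O(V(S))$ and $\rho \in Sh(R,S)$ corresponding to $\bar\rho : \mathcal O(V(S)) \to \mathcal O(V(R))$, I define the composite shuffle in $Sh(R,T)$ to be the element corresponding to $\bar\rho \circ \bar\sigma : \mathcal O(V(T)) \to \mathcal O(V(R))$. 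Since the composite of two suprema-preserving maps again preserves suprema, this lands in $Hom_\vee(\mathcal O(V(T)), \mathcal O(V(R))) \cong Sh(R,T)$, so the operation is well-defined.

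The key steps, in order, are as follows. First I would fix, for each pair of trees, the natural identification of the corollary, being careful that the direction of the hom-set matches: a shuffle of $S$ and $T$ is a morphism \emph{from} $\mathcal O(V(T))$ \emph{to} $\mathcal O(V(S))$, so that the source tree of the semilattice map is the second tree of the shuffle. This bookkeeping is what makes the composition types line up: from $\mathcal O(V(T)) \to \mathcal O(V(S))$ and $\mathcal O(V(S)) \to \mathcal O(V(R))$ one obtains $\mathcal O(V(T)) \to \mathcal O(V(R))$, which is exactly a shuffle of $R$ and $T$. Second, I would verify that composition of sup-semilattice maps preserves all suprema; this is immediate since each factor does. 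Third, associativity of the shuffle composition reduces directly to associativity of composition of maps, which is automatic. Finally, to justify the assertion that this yields a category \emph{enriched in distributive lattices}, I would check that the composition map $Sh(S,T) \times Sh(R,S) \to Sh(R,T)$ is a morphism of lattices in each variable separately (equivalently, a bilinear map of lattices), using that composition with a fixed sup-preserving map is itself a lattice homomorphism on the hom-lattice; the identity shuffles are the elements corresponding to the identity maps $\mathrm{id}_{\mathcal O(V(S))}$.

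The main obstacle I anticipate is not the existence of the composition, which is essentially formal once the corollary is in hand, but rather verifying the enrichment claim cleanly, namely that composition respects the lattice structure (meets and joins) in each variable, so that $Sh(-,-)$ genuinely defines a lattice-enriched category rather than merely a poset-enriched or set-level one. One must confirm that for a fixed $\bar\rho$, the map $\bar\sigma \mapsto \bar\rho \circ \bar\sigma$ on the ordered set $Hom_\vee(\mathcal O(V(T)), \mathcal O(V(S)))$ preserves finite meets and joins, and symmetrically for fixing $\bar\sigma$. Joins are handled by the pointwise supremum and the sup-preservation of the factors; meets require slightly more care, since composition with an arbitrary sup-preserving map need not a priori commute with pointwise infima. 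I would address this by using that in these finite distributive lattices the relevant maps are determined by their values on the basic open sets $B(p)$ (which form a copy of the opposite poset), reducing the lattice identities to a pointwise check on generators, where distributivity of $\mathcal O(P)$ makes the verification routine.
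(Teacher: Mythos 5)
Your construction of the composition is exactly the paper's own (implicit) argument: the corollary is stated there without further proof, as a direct consequence of the preceding identification $Sh(S,T) \cong Hom_\vee(\mathcal O(V(T)), \mathcal O(V(S)))$, composition of shuffles being composition of sup-semilattice maps, with associativity and identities inherited from composition of functions. That part of your proposal is correct, including the bookkeeping that a shuffle of $S$ and $T$ corresponds to a map \emph{from} $\mathcal O(V(T))$ \emph{to} $\mathcal O(V(S))$.

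The gap is in your last paragraph: composition does \emph{not} preserve meets in each variable, so no argument via generators and distributivity can establish it; the statement you set out to verify is false. Counterexample: let $R = T = L_1$ and let $S$ be the tree whose poset of vertices is $\{s_0 < s_1,\ s_0 < s_2\}$ with $s_1, s_2$ incomparable. Then $Sh(S,T) \cong \mathcal O(V(S))$, $Sh(R,S) \cong \mathcal O(V(S)^\vee)$, and $Sh(R,T) \cong \mathcal O(\{\ast\})$ is the two-element lattice. Unwinding the $Hom_\vee$ identification, the composite of $U \in \mathcal O(V(S) \times V(T)^\vee)$ and $W \in \mathcal O(V(R) \times V(S)^\vee)$ is the open set $\{(r,t) \mid \exists s \in V(S):\ (s,t) \in U \text{ and } (r,s) \in W\}$; in the present case this says the composite is the top element if and only if $U \cap W \neq \emptyset$ as subsets of $V(S)$. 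Now take $W = V(S)$ (the top of $Sh(R,S)$), $U_1 = \{s_1\}$ and $U_2 = \{s_2\}$. Each $U_i$ composes with $W$ to the top element of $Sh(R,T)$, yet $U_1 \wedge U_2 = \emptyset$ composes to the bottom element; the symmetric choice $U = \{s_1,s_2\}$, $W_1 = \{s_0,s_1\}$, $W_2 = \{s_0,s_2\}$ shows the failure in the other variable. The reason is visible in the formula: an existential quantifier commutes with unions but not with intersections. In particular your assertion that ``composition with a fixed sup-preserving map is itself a lattice homomorphism on the hom-lattice'' is precisely the false step — it is a homomorphism of sup-semilattices only. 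What is true, and what makes the corollary correct, is that composition preserves joins (hence the order) in each variable separately, i.e. it is a bimorphism of sup-semilattices, equivalently a map out of the tensor product of sup-semilattices; the enrichment claim should be read in that sense, the hom-objects happening to be finite distributive lattices by the earlier corollary.
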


We conclude this section with a discussion of the group of automorphisms of the lattice $Sh(S,T)$ of the shuffles of two trees $S$ and $T$; or equivalently,
the group of automorphisms of the partial order $V(S) \times V(T)^\vee$. Clearly two automorphisms $\alpha$ of $S$ and $\beta$ of $T$ give an automorphism 
of $Sh(S,T)$, thus defining a map
$$\pi: Aut(S) \times Aut(T) \to Aut(Sh(S,T)).$$
We shall prove the following result.

\begin{prop}
Let $S$ and $T$ be two reduced trees. Then the above map  $\pi: Aut(S) \times Aut(T) \to Aut(Sh(S,T))$ is an isomorphism, 
except in the case where $S=T$ is a linear tree with at least two vertices, in which case $Aut(S)=Aut(T)$ is trivial while $Aut(Sh(S,T))$ is $\Sigma_2$.
\end{prop}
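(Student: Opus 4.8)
The plan is to transport the whole question to partial orders. By the proposition identifying shuffles with open sets, $\varphi$ identifies $Sh(S,T)$ with the lattice $\mathcal O(X\times Y)$, where $X=V(S)$ and $Y=V(T)^\vee$. As recorded just above that proposition, for any finite poset $P$ the lattice $\mathcal O(P)$ is freely generated as a sup-semilattice by its basic opens $B(p)$, which are exactly its join-irreducible elements and form a partial order isomorphic to $P^\vee$. A lattice automorphism permutes the join-irreducibles and is determined by its effect on them, so it restricts to an automorphism of $P^\vee$, and conversely; hence $Aut(\mathcal O(P))\cong Aut(P^\vee)=Aut(P)$. Taking $P=X\times Y$ gives
\[ Aut(Sh(S,T))\cong Aut\left(V(S)\times V(T)^\vee\right). \]
Since $S$ and $T$ are reduced, passage to vertex posets yields $Aut(S)=Aut(V(S))$ and $Aut(T)=Aut(V(T))=Aut(V(T)^\vee)$, and under all these identifications $\pi$ becomes the canonical homomorphism $Aut(X)\times Aut(Y)\to Aut(X\times Y)$ sending $(\alpha,\beta)$ to the product automorphism $\alpha\times\beta$. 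This map is visibly injective (project to the factors, using that $X,Y$ are nonempty), so everything reduces to deciding when it is surjective. We may assume $S,T\neq\eta$, since otherwise $Sh(S,T)$ is a single point and the statement is degenerate.

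Next I would record two structural facts about the factors. First, $X=V(S)$ is \emph{directly indecomposable}: if $X\cong A\times B$ with both $A,B$ having at least two elements, then choosing $0_A<a$ and $0_B<b$ produces the incomparable pair $(a,0_B),(0_A,b)$ inside the down-segment $\downarrow(a,b)$, contradicting that $V(S)$ is treelike. Dually $Y=V(T)^\vee$ is indecomposable because $V(T)$ is. Second, $X\times Y$ is connected in the comparability graph: writing $r_S$ for the least element of $V(S)$ and $r_T$ for the greatest element of $V(T)^\vee$, every $(p,q)$ is joined to $(r_S,r_T)$ by the zigzag $(p,q)\geq(r_S,q)\leq(r_S,r_T)$.

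I would then determine exactly when the factors are isomorphic. If $V(S)\cong V(T)^\vee$, the common poset is at once treelike and the opposite of the treelike order $V(T)$. But such a poset must be a chain: any two incomparable elements $a,b$ have a meet $c$ (down-segments are chains sharing the root, so $\downarrow a\cap\downarrow b$ has a top element), and then $a,b$ are incomparable elements of $\uparrow c$, whereas up-segments of $V(T)^\vee$ are chains. Hence the factors are isomorphic precisely when both $V(S)$ and $V(T)$ are chains of the same length, i.e. when $S=T=L_n$.

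Finally I would invoke the rigidity of products of connected, directly indecomposable partial orders (Hashimoto's unique factorization theorem for the categorical product): for such factors every automorphism of $X\times Y$ either preserves the two factors, and is therefore of the form $\alpha\times\beta$, or interchanges them, which is possible only when $X\cong Y$, in which case one picks up a single extra involution. When $S,T$ are not a common linear tree the factors are non-isomorphic, so $Aut(X\times Y)=Aut(X)\times Aut(Y)$ and $\pi$ is an isomorphism. When $S=T=L_n$ with $n\geq2$, both factors are the chain on $n\geq2$ elements, whose only automorphism is the identity, so $Aut(X\times Y)$ is the single swap $\Sigma_2$ (reflection of the $n\times n$ grid across its diagonal), while $Aut(S)\times Aut(T)$ is trivial; for $n=1$ the product is a point and the swap is trivial, so there is no exception. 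The essential obstacle is exactly this rigidity step, ruling out ``diagonal'' automorphisms mixing non-isomorphic factors; I would either cite the poset factorization theorem, or argue directly that a cover-preserving automorphism must send the two kinds of covering edges ($X$-steps and $Y$-steps) consistently to one kind, the consistency being forced by connectedness.
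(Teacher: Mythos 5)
Your argument is correct in its main line, but it takes a genuinely different route from the paper's. The paper is entirely self-contained: it first disposes of the linear case by hand (a rectangle $\{1,\ldots,m\}\times\{1,\ldots,n\}^\vee$ admits only the diagonal swap, and only when $m=n$), and then, for non-linear $S$, shows that $V(S)\times\{r_T\}$ is the \emph{unique maximal open subtree} of $V(S)\times V(T)^\vee$ containing all maximal elements; being characterized order-theoretically, it is preserved by any automorphism $\psi$, which therefore fixes $(r_S,r_T)$, restricts to automorphisms of $\uparrow(r_S,r_T)\simeq V(S)$ and $\downarrow(r_S,r_T)\simeq V(T)^\vee$, and is finally shown to be a product automorphism by running the rectangle argument on each interval between $(s,r_T)$ and $(r_S,t)$. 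You instead outsource the rigidity to general factorization theory: treelike posets are directly indecomposable, the product is connected, the factors are isomorphic only in the linear case (your observation that a poset which is both treelike and op-treelike is a chain is a nice touch), and then Hashimoto. Your approach is more modular and isolates exactly which features of trees matter; the paper's avoids any appeal to a nontrivial external theorem. Two caveats. First, what you need is not bare unique factorization (uniqueness of the isomorphism types of the factors does not by itself control automorphisms) but the \emph{strict refinement property} for direct decompositions of connected posets; that is what Hashimoto's theorem actually provides, and you should cite it in that form. Second, your closing sketch of a ``direct'' cover-based argument is not a proof as it stands: the ``consistency forced by connectedness'' is precisely the content of the factorization theorem (for Hasse diagrams it amounts to Sabidussi--Vizing unique factorization of connected graphs under the Cartesian product), so either keep the citation or expect real work there. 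Incidentally, your exclusion of $S,T=\eta$ is not merely cosmetic: if $S=\eta$ and $Aut(T)\neq 1$ then $\pi$ fails to be injective, an edge case the paper's statement and proof silently pass over.
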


\begin{proof}
 Let us start with the easy linear case.
If $S=L_m$ and $T=L_n$ are linear trees with $m$ and $n$ vertices, respectively, than $V(S) \times V(T)^\vee$ is a rectangle $\{1, \ldots, m\} \times \{ 1, \ldots, n\}^\vee$.
The only possible non-identity automorphism of such a rectangle switches the two initial sides $\{1\} \times \{ 1, \ldots, n\}^\vee$ and $\{1, \ldots, m\} \times \{ n\}$, and this
automorphism exists if and only if $n=m$ (but is again the identity if $n=m=1$).

Next, suppose $S$ is not linear, and let $s_1$ and $s_2$ be two distinct top vertices in $S$. Also, write
$$M=\{(s,r_T) \, | \, s \text{ is a top vertex in } S \}$$
for the set of maximal elements in V(S) $\times V(T)^\vee$.
The subset $V(S) \times \{r_T\} \subset V(S) \times V(T)^\vee$ contains $M$, is open, and is a tree.
We claim it is the maximal set with these three properties. Indeed, if $A$ is an open tree containing $M$, then $A$ cannot contain any element of the form $(s',t')$ with $t' \neq r_T$.
To see this, observe that since $A$ is a tree, it must then contain a lower bound for $(s_1, r_T), (s_2, r_T)$ and $(s',t')$, which is an element $(s,t)$ with $s \leq s_1, s_2$ and $t\neq r_T$. Since $A$ is open, it will then also contain $(s_1,t)$ and $(s_2,t)$, as well as $(s,r_T)$:

$$
\xymatrix{
(s_1,r_T) \ar@{-}[dd] \ar@{-}[dr] & & (s_2,r_T) \ar@{-}[dd] \ar@{-}[dl] \\
& (s,r_T)\ar@{-}[dd] & \\
(s_1,t)  \ar@{-}[dr] & & (s_2,t)  \ar@{-}[dl] \\
& (s,t)
}
$$
which is impossible in a tree.
This shows that $A=V(S) \times \{r_T\}$, and hence this set is completely described by order-theoretic properties. In particular, any automorphism $\psi$ of $V(S) \times V(T)^\vee$
must map $V(S) \times \{r_T\}$ to itself, and hence fix $(r_S, r_T)$. Thus $\psi$ restricts to an automorphism of $\uparrow (r_S,r_T) \simeq V(S)$ and of $\downarrow (r_S,r_T) \simeq V(T)^\vee$.
In other words, there are unique automorphisms $\alpha$ of $S$ and $\beta$ of $T$ such that 
$$\psi(r_S,t)= \beta(t) \ \text{ and } \psi(s,r_T)=\alpha(s).$$ 
We claim that $\psi(s,t)=(\alpha(s), \beta(t))$ for all $s$ and $t$. Indeed, such a point $(s,t)$ lies on the side of the rectangular sublattice defined as the ``interval'' 
between $(r_S,t)$ and $(s,r_T)$.

$$
\xymatrix{
& (s,r_T)\ar@{-}[dr]\ar@{-}[dl] & \\
(r_S,r_T)  \ar@{-}[dr] & & (s,t)  \ar@{-}[dl] \\
& (r_S,t)
}
$$
But we just saw that the only possible automorphism of such a rectangle switches $(r_S,r_T)$ and $(s,t)$. Applying this to $\psi^{-1} \circ \pi (\alpha, \beta)$,
which fixes $(s,r_T)$ and $(r_S,t)$ and hence defines an automorphism of this rectangle, we find that since it fixes $(r_S,r_T)$, it must also fix $(s,t)$. 
This proves $\psi(s,t)=(\alpha(s), \beta(t))$ for all $s$ and $t$.

The case where $T$ is non-linear follows by symmetry.
\end{proof}

%%%%%%%%%%%%%%%%%%%%%%%%%%%%%%%%%%%%%%%%%%%%%%%%%%%%%%%%%%%%%%%%%%%%%%%%%%
% 
%%%%%%%%%%%%%%%%%%%%%%%%%%%%%%%%%%%%%%%%%%%%%%%%%%%%%%%%%%%%%%%%%%%%%%%%%%

\section{Relation to other structures}\label{S:Last}

In this section, we briefly sketch the r\^ole played by shuffles in the relation between trees and other structures, viz. topological spaces, operads and dendroidal sets.

\subsection{Topological spaces}

Each tree $S$ defines a topological space $BS$, called its classifying space. One way to define it is by viewing the partial order $E(S)$ as a category, and then to take the classifying space
of this category, i.e. the geometric realization of its nerve $N(E(S))$ \cite{GJ}. Explicitly, a point of $BS$ is an assignment $\lambda$ of a length $\lambda(e)$ to each edge $e$ of $S$.
These lengths are real numbers $ 0 \leq \lambda(e) \leq 1$, and the assignment must satisfy two conditions: 
\begin{itemize}
 \item  $\displaystyle{ \sum_e \lambda(e)=1}$
 \item the set of edges $e$ with $\lambda(e)>0$ lie on a single branch.
\end{itemize}
The topology of $BS$ is inherited from the product topology on $[0,1]^{E(S)}$. Thus, for the linear tree $L_n$ with $n+1$ edges, 
$BL_n=\{(t_0, \ldots, t_n) \, | \, 0 \leq t_i \leq 1, \sum t_i=1 \}$ is the standard $n$-simplex $\Delta^n$. 
As pointed out in the introduction, an elementary but important observation is that the product $\Delta^n \times \Delta^m$ is a
union of copies of $\Delta^{n+m}$ indexed by all the $(n,m)$-shuffles, i.e. all the shuffles of $L_n$ and $L_m$. More generally, one has the following proposition.

\begin{prop}
Let $S$ and $T$ be trees. Then $BS \times BT = \bigcup_A BA$ where $A$ ranges over all the shuffles of $S$ and $T$.
\end{prop}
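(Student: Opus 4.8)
The plan is to prove the identity $BS \times BT = \bigcup_A BA$ by combining a pointwise (set-theoretic) description of both sides with the inductive characterization of shuffles from Proposition~\ref{InductionProperty}. First I would recall the explicit description of points: a point of $BS$ is a length assignment $\lambda : E(S) \to [0,1]$ with $\sum_e \lambda(e) = 1$ whose support lies on a single branch, and similarly a point of $BT$ is an assignment $\mu : E(T) \to [0,1]$. Thus a point of $BS \times BT$ is a pair $(\lambda, \mu)$, and the supports of $\lambda$ and $\mu$ determine a branch $\alpha$ of $S$ and a branch $\beta$ of $T$. I would then observe that each shuffle $A$ gives an inclusion $BA \hookrightarrow BS \times BT$: an edge of $A$ is labelled by a pair $(s,t) \in E(S) \times E(T)$, and a length assignment $\nu$ on $E(A)$ pushes forward to the pair $(\lambda, \mu)$ defined by $\lambda(s) = \sum_{t} \nu(s,t)$ and $\mu(t) = \sum_{s} \nu(s,t)$, the sums being over edges of $A$ with the given first (resp. second) coordinate. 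One checks that $\sum_s \lambda(s) = \sum_{(s,t)} \nu(s,t) = 1$ and likewise for $\mu$, and that the single-branch support condition is inherited, so this is a well-defined continuous map; this gives the inclusion $\bigcup_A BA \subseteq BS \times BT$.

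The substantive direction is the reverse inclusion, and here I would reduce to the linear (classical) case. Given $(\lambda, \mu) \in BS \times BT$, let $\alpha$ and $\beta$ be the branches carrying the supports of $\lambda$ and $\mu$; these are linear trees, and restricting gives a point of $B\alpha \times B\beta = \Delta^{|\alpha|} \times \Delta^{|\beta|}$. By the classical shuffle decomposition of a product of simplices (the $n=\dim\alpha$, $m=\dim\beta$ special case quoted in the introduction), this point lies in $BC$ for some classical shuffle $C$ of the two branches $\alpha$ and $\beta$. By Remark~\ref{rem:Extension}, the classical shuffle $C$ extends to a genuine shuffle $A$ of $S$ and $T$ whose restriction to the branch over $(\ell_\alpha, \ell_\beta)$ is exactly $C$. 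I would then show that the point $(\lambda, \mu)$, which is supported on $\alpha \times \beta$, actually lies in the image of $BA$: the length assignment $\nu$ on the branch $\downarrow(\ell_\alpha,\ell_\beta) = C$ obtained from the classical decomposition, extended by zero on the other edges of $A$, is a valid point of $BA$ mapping to $(\lambda,\mu)$. Because the support of $\nu$ lies on the single branch $C$ of $A$, the single-branch condition for a point of $BA$ is satisfied automatically.

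To organize this cleanly I would prove the linear base case first as a lemma (or simply cite the classical simplex identity), and then run the two inclusions above; alternatively one can give an inductive proof using Proposition~\ref{InductionProperty}(3), decomposing $S = C_m[S_1,\ldots,S_m]$ and $T = C_n[T_1,\ldots,T_n]$ and matching the union $\bigcup_A BA$ against the corresponding decomposition of $BS \times BT$ along the root edges. The inductive route has the advantage of making the bijection $\theta$ of Proposition~\ref{InductionProperty} do the bookkeeping, but it requires carefully tracking how lengths on the root edges distribute, which is essentially the gluing that makes the classical staircase picture work.

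The main obstacle I anticipate is verifying that the union is genuinely a cover rather than merely a collection of subspaces, i.e. that \emph{every} point of $BS \times BT$ lands in at least one $BA$; this is precisely where the single-branch support condition is essential and where one must invoke the classical shuffle decomposition together with the extension result of Remark~\ref{rem:Extension}. A secondary technical point is checking that the maps $BA \to BS \times BT$ are compatible on overlaps so that the decomposition is well-behaved (the shuffles $BA$ meet along lower-dimensional faces corresponding to partial shuffles), but for the bare set-theoretic equality asserted in the proposition this compatibility is not strictly needed and can be deferred.
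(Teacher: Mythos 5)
Your proof is correct, and its combinatorial heart coincides with the paper's: reduce a point to the two branches carrying its support, locate it in a classical shuffle of those two branches, and extend that classical shuffle to a shuffle of $S$ and $T$ by Remark~\ref{rem:Extension}. The implementation, however, is genuinely different. The paper never touches individual points: since nerve and geometric realization commute with products and preserve injections, it identifies $BS \times BT$ with $|N(E(S) \times E(T))|$, obtains an embedding $BA \to BS \times BT$ from the poset inclusion $E(A) \subset E(S) \times E(T)$ for each shuffle, and proves joint surjectivity at the level of simplicial sets: every non-degenerate simplex of $N(E(S) \times E(T))$ is a face of a maximal one, and a maximal simplex is forced to run gap-free from the pair of roots to a pair of leaves, i.e.\ to be a classical shuffle of a branch of $S$ with a branch of $T$, hence (again by Remark~\ref{rem:Extension}) to lie in $NE(A)$ for some shuffle $A$. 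That argument is self-contained, reproving the classical identity $\Delta^p \times \Delta^q = \bigcup \Delta^{p+q}$ as a special case, whereas you import that identity as a black box; this is defensible, since the paper itself quotes it as elementary, but it makes your proof depend on an external input where the paper's does not. In exchange, your barycentric-coordinate argument is more concrete and avoids simplicial machinery. One point you should patch: you call the pushforward map $BA \to BS \times BT$ an ``inclusion'' without justifying injectivity, which is needed to read $BA$ as a subspace of the product; the clean fix is precisely the paper's observation that your map is the realization of the nerve of the injective poset map $E(A) \to E(S) \times E(T)$, both of which preserve monomorphisms (and then it is a closed embedding, $BA$ being compact and $BS \times BT$ Hausdorff). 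Your fallback suggestion of an induction via Proposition~\ref{InductionProperty}(3) is not needed once the branch argument is in place.
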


\begin{proof}
 Each shuffle $A$ of the trees $S$ and $T$ defines an injection of posets $E(A) \to E(S) \times E(T)$, and hence a map 
$|NE(A)| \to |N(E(S \times T))|$.
Since realization and nerve commute with products and preserve injections, this gives an embedding of compact Hausdorff spaces $BA \to BS \times BT$.
The family of all these embeddings is jointly surjective. Indeed, the family of maps of simplicial sets $NE(A) \to N(E(S) \times E(T))$ already is.
To see this, consider a non-degenerate $n$-simplex of $N(E(S) \times E(T))$, say
\begin{equation*}
 (s_0,t_0) \leq (s_1,t_1) \leq \ldots \leq (s_n,t_n).
\end{equation*}
Call such a non-degenerate simplex maximal if it is not a face of another non-degenerate simplex. Each such maximal simplex must have the property that
$(s_n,t_n)$ is a pair of leaves, while $(s_0,t_0)$ is the pair of roots, and the simplex itself is a shuffle of the two branches, down from $s_n$ in $S$ and down from $t_n$ in $T$.
Thus, a maximal simplex is a simplex in a shuffle $A$ of $S$ and $T$ (cf Remark~\ref{rem:Extension}).
\end{proof}

\begin{remark}\label{Rem:ColimitDiagram}
Realization and nerve also preserve pullbacks (intersections), so for two shuffles $A$ and $A'$, we find that $BA \cap BA' = B(A \cap A')$.
Notice that the partial suborder $E(A) \cap E(A') \subset ES \times E(T)$ is again a tree with the same root and leaves as $A$ and as $A'$. 
This tells us exactly how the different $BA$'s are glued together to form $BS \times BT$. 
More formally, let $\mathcal I$ be the family of finite non-empty subsets $I=\{A_1, \ldots, A_n \}$ of the set of all shuffles $A$ of $S$ and $T$, and let 
$A_I= A_1 \cap \ldots \cap A_n$. Then $BS \times BT$ is the colimit of the diagram of spaces $BA_I$ indexed by $\mathcal I$.
\end{remark}

\subsection{Operads}
A coloured operad $P$ consists of a set $C=C_P$ of colours, and for each $n \geq 0$ and each $(n+1)$-tuple $(c_1, \ldots, c_n,c)$ of elements of $C$
a set $P(c_1, \ldots, c_n; c)$ of ``operations''.
 The idea is that these operations take $n$ inputs of ``colours'' $c_1, \ldots, c_n$ respectively, and produce an output of colour $c$.
For example, if $\{X_c\}_{c \in C}$ is a family of sets indexed by $C$, one can take $P(c_1, \ldots, c_n; c)$  to be the set of functions
$X_{c_1} \times \ldots \times X_{c_n} \to X_c $.
By definition, an operad also has structure maps, encoding  the composition of operations, identity operations and permutations of variables.
We refer to \cite{Yau} for a precise definition and examples.

For two operads $P$ and $Q$, with sets of colours $C$ and $D$ respectively, there is a so-called \textit{Boardman-Vogt tensor product} $P \otimes_{BV} Q$, 
which is a coloured operad with $C \times D$ a set of colours, and operations generated by the following:
$$ p \otimes d \in (P \otimes_{BV} Q)((c_1,d), \ldots, (c_n,d); (c,d))$$
for each $p \in P(c_1, \ldots, c_n; c)$ and $d\in D$; and
$$c \otimes q \in (P \otimes_{BV} Q)((c,d_1), \ldots, (c,d_n) ; (c,d))$$
for each $q\in Q(d_1, \ldots, d_n;d)$ and $c \in C$.
These generators are next subject to relations coming from the structure maps of $P$ and $Q$ separately, together with the following
\textit{Boardman-Vogt (interchange) relation}:
$$(p \otimes d)(c_1 \otimes q, \ldots, c_n \otimes q ; c \otimes d)=(c\otimes q)(p\otimes d_1, \ldots, p \otimes d_m ; c \otimes d) \cdot \tau$$
for $p$ and $q$ as above, and $\tau$ the permutation putting the input colours $(c_i,d_j)$ in the same order.
See \cite{BV}  for a detailed description.

Every tree $T$ defines an operad $\Omega(T)$ whose colours are the edges of $T$. There is exactly one operation in $\Omega(T)(e_1, \ldots, e_n ; e)$ if there is
a subtree of $T$ with leaves $e_1, \ldots, e_n $ and root edge $e$ ; and otherwise $\Omega(T)(e_1, \ldots, e_n ; e)$ is empty.
Composition of operations is given by the grafting of subtrees.

For two trees $S$ and $T$, there is a diagram of trees indexed by the poset $\mathcal I$ from Remark~\ref{Rem:ColimitDiagram}, assigning to 
$\{A_1, \ldots, A_n \}$ the tree $A_I=A_1 \cap \ldots \cap A_n$. For each of these trees, there is a injective map of operads
$$\Omega(A_I) \to \Omega(S) \otimes_{BV} \Omega(T)$$
\noindent and in fact, $\Omega(S) \otimes_{BV} \Omega(T)$ is the union of the $\Omega(A)$'s indexed by all the shuffles of $S$ and $T$ ;
or more precisely, the colimit of the diagram of operads $\Omega(A_I)$ induced by the sets $I$ in $\mathcal I$.

\subsection{Dendroidal sets}
There is a category $\Omega$ introduced in \cite{MW1} whose objects are trees (possibly with stumps), and whose morphisms $S \to T$ are maps of operads $\Omega(S) \to \Omega(T)$.
The category of presheaves of sets on $\Omega$, i.e. of functors $X : \Omega^{op} \to Sets$, 
and natural transformations between them is referred to as the category of \textit{dendroidal sets}, and denoted $dSets$.
It carries a tensor product, defined on two representable dendroidal sets $\Omega[S]$ and $\Omega[T]$ corresponding to trees $S$ and $T$ by
$$(\Omega[S] \otimes \Omega[T])(R)=Hom(\Omega(R), \Omega(S) \otimes_{BV} \Omega(T))$$
\noindent where $R$ is any object of $\Omega$ and $Hom$ is in the category of coloured operads.
This tensor product is then extended to arbitrary dendroidal sets in the unique (up to isomorphism) way so as to preserve colimits in each variable separately.
Every presheaf is a colimit of representables, and for the presheaf $\Omega[S] \otimes \Omega[T]$ just defined, one can check that
$\Omega[S] \otimes \Omega[T]$ is the colimit of $\Omega[A]$'s where $A$ ranges over finite intersections of shuffles, exactly as in Remark~\ref{Rem:ColimitDiagram} above.
This fact plays a crucial r\^ole in the analysis of homotopical properties of the tensor product of dendroidal sets, see e.g. \cite{HHM}.
It follows formally that for the unique colimit preserving functor $B$ from dendroidal sets to topological spaces defined on representables by
$B(\Omega[T])=BE(T)$ (the classifying space of $E(T)$), there is a natural homeomorphism 
$$B(X \otimes Y) \to B(X) \times B(Y)$$
 for any two dendroidal sets $X$ and $Y$.
This property, in turn, is closely related to the fact that $\Omega$ is a ``test category'' in the sense of Grothendieck, cf \cite{ACM}.

\bibliographystyle{plain}

\bibliography{ShufflesOfTrees}

\end{document}